\newtheorem{theorem}{Theorem}[section]
\newtheorem{lemma}[theorem]{Lemma}
\newtheorem{proposition}[theorem]{Proposition}
\newtheorem{corollary}[theorem]{Corollary}
\newtheorem*{theorem*}{Theorem}
\newtheorem{remark}[theorem]{Remark}
\newtheorem{definition}[theorem]{Definition}
\newtheorem{condition}{Condition}
\def\as{\hbox{\rm -a.s.{ }}}
\def\tr{\hbox{\rm tr$\,$}}
\numberwithin{equation}{section}
\newcommand{\E}{\mathbb{E}}
\newcommand{\R}{\mathbb{R}}
\newcommand{\Ff}{\mathbb{F}}
\newcommand{\U}{\mathcal{U}}
\newcommand{\F}{\mathcal{F}}
\newcommand{\cR}{\mathcal{R}}
\newcommand{\cP}{\mathcal{P}}
\newcommand{\M}{\mathcal{M}}
\newcommand{\hH}{\mathcal{H}}
\def\no{\noindent}
\def\ms{\medskip}
\def\qq{\qquad}
\def\dbR{{\mathop{\rm l\negthinspace R}}}
\def\3n{\negthinspace \negthinspace \negthinspace }
\def\2n{\negthinspace \negthinspace }
\def\1n{\negthinspace }
\def\dbL{{\mathop{\rm l\negthinspace L}}}
\def\dbM{{\mathop{\rm l\negthinspace M}}}
\def\dbP{{\mathop{\rm l\negthinspace P}}}
\def\dbQ{{\mathbb{Q}}}
\def\dbR{{\mathop{\rm l\negthinspace R}}}
\def\={\buildrel \triangle \over =}
\def\bt{\begin{theorem}}
\def\bcd{\begin{condition}}
\def\ecd{\end{condition}}
\def\et{\end{theorem}}
\def\bc{\begin{corollary}}
\def\ec{\end{corollary}}
\def\bde{\begin{definition}}
\def\ede{\end{definition}}
\def\bl{\begin{lemma}}
\def\el{\end{lemma}}
\def\bp{\begin{proposition}}
\def\ep{\end{proposition}}
\def\br{\begin{remark}}
\def\er{\end{remark}}
\def\ba{\begin{array}}
\def\ea{\end{array}}
\def\be{\begin{equation}}
\def\ee{\end{equation}}
\newcommand{\Prob}{\mathbb{\Prob}}
\newcommand{\mytilde}{\raise.17ex\hbox{$\scriptstyle\mathtt{\sim}$}}
\def\d{\delta}             
\def\ep{\varepsilon}       
\def\m{\mu}
\def\ms{\medskip} 
\def\no{\noindent}
\begin{document}
\title[Stochastic Relaxed Optimal Control for G-SDEs]{On Relaxed Stochastic Optimal Control for Stochastic Differential Equations Driven by G-Brownian Motion}

\author{ Amel Redjil and  Salah Eddine Choutri}

\address{Lab. of Probability and Statistics (LaPS), Department of Mathematics \\ Badji Mokhtar university, B.P.12 \\ 23000, Annaba \\ Algeria}  

\email{rdj$\_$amel@yahoo.fr}

\address{Department of Mathematics \\ KTH Royal Institute of Technology \\ 100 44, Stockholm \\ Sweden}

\email{choutri@kth.se}

\date{\today}

\subjclass[2010]{60H10, 60H07, 49N90}

\keywords{Relaxed optimal control, G-chattering lemma, G-Brownian motion, Sublinear expectation, Capacity}

\begin{abstract}
In the G-framework, we establish existence of an optimal stochastic relaxed control for stochastic differential equations driven by a G-Brownian motion.
\\

\end{abstract}

\maketitle

\tableofcontents


\section{Introduction}

The purpose of this paper is to study optimal control of systems subject to model uncertainty or ambiguity due to incomplete or inaccurate information, or vague concepts and principles. Climate or weather and financial markets are typical fields where information is subject to uncertainty. For example in optimal portfolio choice problems in finance where the volatility and the risk premium processes are unknown and hard to accurately estimate  from reliable data, we need to consider a family of different models or scenarios instead of one fixed asset process based on a given prior or estimate. To cope with any skeptical attitude towards a given model and quantify ambiguity aversion, the decision maker needs to perform a robust portfolio optimization that survives all given scenarios. 

Aspects of model ambiguity such as volatility uncertainty have been studied by Peng \cite{peng2007, peng2008,peng2010} who introduced an abstract sublinear or $G$-expectation space with a process called $G$-Brownian motion, and by Denis and Martini \cite{denis1} who suggested a structure based on quasi-sure analysis from abstract potential theory to construct a similar structure using a tight family $\cP$ of possibly mutually singular probability measures. Although these two approaches are substantially different, Denis {\it et al.} \cite{denis2} show that they are very closely related by providing a dual representation of the sublinear expectation $\widehat{\E}$ associated with the $G$-Brownian motion as the supremum of ordinary expectations over $\cP$. 

Within the $G$-Brownian motion framework as presented in \cite{peng2007, peng2010}, this paper deals with optimal control of systems governed by stochastic differential equation driven by a $G$-Brownian motion. More precisely, we ask if there exists a stochastic control $\widehat{u}\in\mathcal{U}$ with values in an action set $U$ such that 

\be\label{intro-opt}
J(\widehat{u})=\inf_{u\in \mathcal{U}}J(u),
\ee
with

\be\label{intro-j}\begin{array}{lll}
J(u)=\underset{\dbP\in\cP}\sup J^{\dbP}(u):=\underset{\dbP\in\cP}\sup\E^{\dbP}\left[\int_0^T f(t,x^{u}(t),u(t))dt+h(x^{u}(T))\right]
\end{array}
\ee
where $x^u$ is a $G$-sde given by \eqref{g-sde}, below. This problem has been studied in \cite{hu2013, biagini2014} and \cite{matoussi2015}, where the authors suggest necessary and sufficient optimality conditions in terms of respectively a Pontryagin's type maximum principle and dynamic programming principle. The objective of this work is  to investigate the problem of existence of an optimal control. Knowing that in the absence of convexity assumptions the control problem \eqref{intro-opt} may not have a solution simply because $\mathcal{U}$ is too small to contain a minimizer, we would like to find a set $\cR$ of controls that 'contains' $\mathcal{U}$ and has a richer topological structure for which the control problem becomes solvable. This embedding is often called a relaxation of the control problem and $\cR$ is the set of relaxed controls, while $\mathcal{U}$ is called the set of strict controls. In Theorem \eqref{opt-relax} which constitutes the main result of this paper, we construct the set $\cR$  of relaxed controls as a subset of the set of probability measures on the action set $U$ and show that
\begin{equation*}
\inf_{u \in \mathcal{U}} J(u)=\inf_{\mu \in \cR}J(\mu)=J(\widehat{\mu}),
\end{equation*}
where
\begin{equation*}
\widehat{\mu}=arg\min_{\mu \in \cR}J(\mu).
\end{equation*}

In section 2,  we collect the notions and results from $G$-stochastic calculus needed to establish our result. In section 3, we introduce the space of relaxed controls and its properties. Finally,  in section 4, we consider the relaxed optimal control problem within the G-framework and prove existence of an optimal relaxed control for our system of $G$-sdes.

\section{Preliminaries} In this section we recall the notions and main results from  the framework of G-stochastic calculus, mainly based on the references \cite{denis1, denis2, peng2007, peng2008,peng2010, soner1} and  \cite{soner2},  we will use in this paper. 
\subsection{G-expectation and G-Brownian motion}
Let $\Omega := \{\omega \in C(\mathbb{R}_+,\mathbb{R}^d): \omega(0)=0 \}$, equipped with the topology of uniform convergence on compact intervals, $\mathcal{B}(\Omega)$ the associated Borel $\sigma$-algebra,  $\Omega_t:= \{ w_{.\wedge t}: w \in \Omega \}$, $ B$  the canonical process and $\mathbb{P}_0$ be the Wiener measure on $\Omega$. Let $\mathbb{F}:= \mathbb{F}^B=\{ \mathcal{F}_t\}{_{t \geq 0}}$ be the raw filtration generated by $B$, which is only left-continuous.  Further, consider the right-limit filtration $\mathbb{F^+}:=\{ \mathcal{F}_t^+ , t \geq 0 \},$  where $ \mathcal{F}_t^+:=\mathcal{F}_{t+}:=\cap_{s>t} \mathcal{F}_s$.

Given a probability measure $\dbP$ on $(\Omega,\mathcal{B}(\Omega))$, we consider the right-continuous $\dbP$-completed filtrations $\mathcal{F}_t^{\mathbb{P}}:= \mathcal{F}_t^+ \vee \mathcal{N}^{\mathbb{P}}(\mathcal{F}_t^+)$ and $\widehat{\mathcal{F}}_t^{\mathbb{P}}:=\mathcal{F}_t^+ \vee \mathcal{N}^{\mathbb{P}}(\mathcal{F}_{\infty}),  $ where  the $\dbP$-negligible set $\mathcal{N}^{\mathbb{P}}(\mathcal{G})$ on a $\sigma$-algebra $\mathcal{G}$ is defined as 
$$
\mathcal{N}^{\mathbb{P}}(\mathcal{G}):=\{D \subset \Omega: \text{there \  exists} \ \widetilde{D} \in \mathcal{G} \ \ \text{such \  that}  \ D \subset \widetilde{D} \ \text{and} \ \mathbb{P}[\widetilde{D}]=0  \}. 
$$ 
We have 
\begin{lemma} [Lemma 2.1, \cite{soner1}] Let $\dbP$  be an arbitrary probability measure on $(\Omega,\F_{\infty})$. For every $\widehat{\mathcal{F}}_t^{\mathbb{P}}$-measurable random variable $\widehat{\xi}$, there exists a $\dbP\as$ unique $\F_t$-measurable random variable $\xi$ such that $\xi=\widehat{\xi},\ \dbP\as$ 

For every $\widehat{\Ff}^{\mathbb{P}}$-progressively measurable process $\widehat{X}$, there exists a unique $\Ff$-progressively measurable process $X$ such that $X=\widehat{X}$, $dt\times \dbP\as$ Moreover, if $\widehat{X}$ is $\dbP$-almost surely continuous, then $X$ can be chosen to $\dbP$-almost surely continuous.
\end{lemma}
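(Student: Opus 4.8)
The plan is to establish the two assertions of the lemma separately, in each case going through a monotone-class / approximation argument so that it suffices to treat simple building blocks and then pass to the limit.

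\textbf{Step 1: the random-variable statement.} First I would fix $t$ and recall that $\widehat{\mathcal{F}}_t^{\mathbb{P}} = \mathcal{F}_t^+ \vee \mathcal{N}^{\mathbb{P}}(\mathcal{F}_\infty)$, so the only difference between $\widehat{\mathcal{F}}_t^{\mathbb{P}}$ and $\mathcal{F}_t^+$ is the adjunction of the $\mathbb{P}$-null sets of $\mathcal{F}_\infty$. For an indicator $\widehat\xi = \mathbf{1}_{\widehat D}$ with $\widehat D \in \widehat{\mathcal{F}}_t^{\mathbb{P}}$, the definition of the completion gives a set $D \in \mathcal{F}_t^+$ with $\widehat D \,\triangle\, D$ contained in a $\mathbb{P}$-null member of $\mathcal{F}_\infty$; then $\xi := \mathbf{1}_D$ satisfies $\xi = \widehat\xi$ $\mathbb{P}$-a.s. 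For a general bounded $\widehat{\mathcal{F}}_t^{\mathbb{P}}$-measurable $\widehat\xi$ I would take simple functions $\widehat\xi_n \to \widehat\xi$ pointwise, produce $\mathcal{F}_t^+$-measurable $\xi_n$ with $\xi_n = \widehat\xi_n$ $\mathbb{P}$-a.s., and set $\xi := \limsup_n \xi_n$, which is $\mathcal{F}_t^+$-measurable and equals $\widehat\xi$ $\mathbb{P}$-a.s.; truncation removes the boundedness hypothesis. There is, however, still a gap: I claimed $\xi$ is $\mathcal{F}_t^+$-measurable, whereas the lemma asserts $\mathcal{F}_t$-measurability. To upgrade, I would invoke the classical fact that under any probability measure the right-continuous filtration and the raw filtration agree up to null sets for the Wiener-type filtration here (Blumenthal-type $0$--$1$ behaviour / the standard result that $\mathcal{F}_{t+} = \mathcal{F}_t \vee \mathcal{N}^{\mathbb{P}}(\mathcal{F}_{t+})$), so a further modification on a $\mathbb{P}$-null set brings us down to an $\mathcal{F}_t$-measurable representative. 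Uniqueness $\mathbb{P}$-a.s.\ is immediate: two $\mathcal{F}_t$-measurable versions agree $\mathbb{P}$-a.s.\ with $\widehat\xi$, hence with each other.

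\textbf{Step 2: the process statement.} For a $\widehat{\Ff}^{\mathbb{P}}$-progressively measurable process $\widehat X$ I would first handle the generating case $\widehat X_s(\omega) = \mathbf{1}_{(a,b]}(s)\,\widehat\xi(\omega)$ with $\widehat\xi$ being $\widehat{\mathcal{F}}_a^{\mathbb{P}}$-measurable, use Step 1 to replace $\widehat\xi$ by an $\mathcal{F}_a$-measurable $\xi$, and note that the resulting $X_s = \mathbf{1}_{(a,b]}(s)\,\xi$ is $\Ff$-progressively measurable and equals $\widehat X$ for $dt\times\mathbb{P}$-a.e.\ $(s,\omega)$. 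A monotone-class argument on the space of bounded progressively measurable processes then extends this to all bounded $\widehat X$, and truncation to the general case; one builds $X$ as a pointwise $\limsup$ of the approximants exactly as in Step 1, which keeps $\Ff$-progressive measurability. Uniqueness $dt\times\mathbb{P}$-a.s.\ follows as before. For the continuity addendum: if $\widehat X$ is $\mathbb{P}$-a.s.\ continuous, then the $\Ff$-progressively measurable $X$ constructed above is $dt\times\mathbb{P}$-indistinguishable from a continuous process, so on the $\mathbb{P}$-full event where both $s\mapsto X_s$ restricted to rationals extends continuously and agrees with $\widehat X$, I can redefine $X$ to be that continuous extension; this redefinition is on a $\mathbb{P}$-null set and preserves $\Ff$-progressive measurability because a process that is $\mathbb{P}$-a.s.\ continuous and adapted to a right-continuous completed filtration has a progressively measurable continuous modification — here one must be careful to keep the modification $\Ff$- (not $\Ff^+$-) measurable, again using that the raw and right-continuous filtrations differ only by null sets under $\mathbb{P}$.

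\textbf{Main obstacle.} The routine part is the monotone-class bookkeeping; the genuinely delicate point is the descent from $\mathcal{F}_t^+$-measurability to $\mathcal{F}_t$-measurability (and, in the process case, from $\Ff^+$- to $\Ff$-progressive measurability). This is where the specific structure of $\Omega = C(\mathbb{R}_+,\mathbb{R}^d)$ with the raw Wiener filtration is used: one needs the fact that for this canonical set-up $\mathcal{F}_{t+}$ and $\mathcal{F}_t$ coincide modulo $\mathbb{P}$-null sets for every probability measure $\mathbb{P}$ on $(\Omega,\mathcal{F}_\infty)$, a consequence of the path regularity of the coordinate process and a Blumenthal-type argument. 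I would isolate this as the key lemma and reference it (it is standard, e.g.\ in Karatzas--Shreve or Stroock--Varadhan), then the rest of the proof is assembling the pieces above.
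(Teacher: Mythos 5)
First, a point of reference: the paper does not prove this lemma at all --- it is quoted from \cite{soner1} --- so there is no in-paper argument to measure yours against, and I am assessing your proposal on its own terms against the statement as transcribed. Your Step 1 is correct up to the production of an $\mathcal{F}_t^+$-measurable version: the characterization of $\mathcal{F}_t^+\vee\mathcal{N}^{\mathbb{P}}(\mathcal{F}_\infty)$-measurable sets and the simple-function/$\limsup$ passage are standard. The genuine gap is exactly the step you flag as delicate, the descent from $\mathcal{F}_t^+$ to $\mathcal{F}_t$. The ``classical fact'' you invoke --- that $\mathcal{F}_{t+}=\mathcal{F}_t\vee\mathcal{N}^{\mathbb{P}}(\mathcal{F}_{t+})$ for \emph{every} probability measure $\mathbb{P}$ on $(\Omega,\mathcal{F}_\infty)$ --- is false. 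Blumenthal-type zero--one behaviour is a consequence of the Markov property and holds for the Wiener measure and its nice relatives, not for arbitrary laws on path space; and the lemma must hold for every $\mathbb{P}\in\mathcal{P}$, a family of mutually singular measures that are deliberately far from $\mathbb{P}_0$. Concretely, take $d=1$, let $\zeta$ be a standard Gaussian and let $\mathbb{P}$ be the law of the path $t\mapsto t\zeta$. Then $\mathcal{F}_0=\sigma(B_0)$ is trivial (every $\omega$ has $\omega(0)=0$), while $Z:=\limsup_n nB_{1/n}$ is $\mathcal{F}_{0+}$-measurable and equals $\zeta$ $\mathbb{P}$-a.s., hence admits no $\mathcal{F}_0$-measurable version. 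So your key lemma fails, and indeed the random-variable assertion read literally with the raw $\mathcal{F}_t$ is false at $t=0$; the provable statement (and the one in the source) has $\mathcal{F}_t^+$ in place of $\mathcal{F}_t$ there --- the transcription in the present paper appears to have dropped the ``$+$''.

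The process assertion is a different matter, because only $dt\times d\mathbb{P}$-a.e.\ equality is required, and that extra room is precisely what permits a descent to the raw filtration --- but by a different device from yours. The route in \cite{soner1,soner2} is: first produce an $\mathbb{F}^+$-progressively measurable version $Y$ (essentially your completion argument), then set $\tilde X_t:=\limsup_n\, n\int_{(t-1/n)^+}^{t}Y_s\,ds$. Each integral is the limit of the $\mathcal{F}_{(t-\varepsilon)+}\subset\mathcal{F}_t$-measurable variables $\int_{(t-1/n)^+}^{t-\varepsilon}Y_s\,ds$, so $\tilde X$ is $\mathbb{F}$-progressively measurable, and Lebesgue differentiation gives $\tilde X=Y=\widehat X$ for $dt\times d\mathbb{P}$-a.e.\ $(t,\omega)$. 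Your monotone-class scheme over rectangles $\mathbf{1}_{(a,b]}(s)\,\widehat\xi(\omega)$ has the further defect that such rectangles generate the predictable, not the progressive, $\sigma$-field, so it cannot reach a general progressively measurable $\widehat X$ without an additional approximation --- which is again the averaging above. (Note that for these rectangles the ``$+$'' costs nothing: an $\mathcal{F}_{a+}$-measurable $\xi$ already makes $\mathbf{1}_{(a,b]}(s)\,\xi$ $\mathbb{F}$-adapted since $\mathcal{F}_{a+}\subset\mathcal{F}_s$ for $s>a$; this is another way of seeing why the process statement survives with the raw filtration while the fixed-$t$ statement does not.) In short: the construction of an $\mathcal{F}_t^+$-version is fine, the passage to $\mathcal{F}_t$ as you propose it rests on a false tool, and the correct repair for the process part is the integral-averaging argument rather than any zero--one law.
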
  

 The $G$-expectation is defined by Peng in \cite{peng2007, peng2008, peng2010} through the nonlinear heat equation in the following sense. A $d$-dimensional random vector $X$ is said to be $G$-normally distributed under the $G$-expectation $\widehat{\E}[\cdot]$ if for each  bounded and Lipschitz continuous function $\varphi$ on $\R^d$, $\varphi\in\text{Lip}(\R^d)$, the function $u$ defined by 
$$
u(t,x):=\widehat{\E}[\varphi(x+\sqrt{t}X)],\quad t\ge 0, \,\, x\in\R^d
$$
is the unique, bounded Lipschitz continuous viscosity solution of the following parabolic equation
$$
\frac{\partial u}{\partial t}-G(D^2u)=0 \quad \text{on}\,\, (t,x)\in(0,+\infty)\times \R^d\quad \text{and}\quad u(0,x)=\varphi(x),
$$
where $D^2u=(\partial_{x_ix_j}^2u)_{1\le i,j\le d}$ is the Hessian matrix of $u$ and the nonlinear operator $G$ is defined by
\be\label{G}
G(A):=\frac{1}{2}\sup_{\gamma\in\Gamma}\{ \tr(\gamma\gamma^* A)\},\quad \gamma\in \R^{d\times d}.
\ee
where $A$ is a $d\times d$ symmetric matrix and $\Gamma$ is a given non empty, bounded and closed subset of $\R^{d\times d}$.  Here, $v^*$ denotes the transpose of the vector $v$. This $G$-normal distribution is denoted by $ N(0,\Sigma)$, where $\Sigma:=\{\gamma\gamma^*,\,\, \gamma\in\Gamma\}$.

 Peng \cite{peng2007, peng2008} shows  that the $G$-expectation $\widehat{\E}:\,\mathcal{H}:=\text{Lip}(\R^d)\longrightarrow \R $ is a consistent sublinear expectation on the lattice $\mathcal{H}$ of real functions i.e. it satisfies
\begin{enumerate}
\item Sub-additivity: for all $X,Y \in \mathcal{H},\quad \widehat{\E}[X+Y] \leq \widehat{\E}[X]+\widehat{\E}[Y].$
\item Monotonicity:  for all $ X,Y \in \mathcal{H},\quad  X \geq Y \Rightarrow \widehat{\E}[X] \geq \widehat{\E}[Y].$
\item Constant preserving: for all $c \in \mathbb{R},\quad  \widehat{\E}[c]=c.$
\item Positive homogeneity: for all $\lambda \geq 0,\, X \in \mathcal{H}, \quad \widehat{\E}[\lambda X]=\lambda \widehat{\E}[X].$
\end{enumerate}

 Let $\text{Lip}(\Omega)$ be the set of random variables of the form  $\xi:=\varphi(B_{t_1},B_{t_2},\ldots,B_{t_n})$ for some bounded Lipschitz continuous function $\phi$  on $\R^{d\times n}$ and $0\le t_1\le t_2\le \cdots \le t_n\le T$. The coordinate process $(B_t,\,\,t\ge 0)$ is called $G$-Brownian motion whenever $B_1$ is  $G$-normally distributed under $\widehat{\E}[\cdot]$ and for each $s,t\ge 0$ and $t_1,t_2,\ldots,t_n\in [0,t]$ we have
$$
\widehat{\E}[\varphi(B_{t_1},\ldots,B_{t_n},B_{t+s}-B_t)]=\widehat{\E}[\psi(B_{t_1},\ldots,B_{t_n})],
$$
where $\psi(x_1,\ldots,x_n)=\widehat{\E}[\varphi(x_1,\ldots,x_n,\sqrt{s}B_1)]$. This property implies that the increments of the $G$-Brownian motion are independent and that $B_{t+s}-B_{t}$ and $B_s$ are identically $N(0,s\Sigma)$-distributed. 

A remarkable result of Peng \cite{peng2007, peng2008} is that if $\mathcal{H}$ is a lattice of real functions on $\Omega$ such that $\text{Lip}(\Omega)\subset \mathcal{H}$, then the $G$-expectation $\widehat{\E}:\,\mathcal{H}\longrightarrow \R $ is a consistent sublinear expectation. 

For $p\in[0,+\infty)$, denote by $\dbL_G^p(\Omega)$  the closure of $\text{Lip}(\Omega)$ under the Banach norm 
 $$
 \|X\|^p_{\dbL_G^p(\Omega)}:=\widehat{\E}[|X|^p].
 $$ 
For each $t\ge 0$,  let $L^0(\Omega_t)$ be the set of $\F_t$-measurable functions. We set  
$$
\text{Lip}(\Omega_t):= \text{Lip}(\Omega) \cap L^0(\Omega_t),\quad  \dbL_G^p (\Omega_t):=\dbL_G^p(\Omega) \cap L^0(\Omega_t).
$$

\subsection{G-stochastic integrals}
For $p\in[0,+\infty)$, we let $M_G^{0,p} (0,T)$ be the space of $\Ff$-progressively measurable, $\R^d$-valued elementary processes of the form
\begin{equation*}
\eta (t)= \sum_{i=0}^{n-1}\eta_i \mathbb{1}_{[t_i,t_{i+1})}(s),
\end{equation*}
where $0 \leq 0=t_0 <t_1 <\cdots<t_{n-1}<t_n= T,\,\, n \geq 1 $ and $\eta_i \in \text{Lip}(\Omega_{t_i})$.  Let $M_G^p(0,T)$ be the closure of $M_G^{0,p} (0,T)$  under the norm 
\begin{equation*}
\Vert \eta \Vert^p_{M_G^p(0,T)}:= \hat{\mathbb{E}}[\int_0^T \vert \eta(t) \vert^p ds].
\end{equation*}
For each $\eta \in M_G^{0,2}(0,T)$, the $G$-stochastic integral is defined pointwisely by 
\begin{equation*}
I_t(\eta)=\int_0^t \eta_s d_GB_s := \sum_{j=0}^{N-1} \eta_j (B_{t\wedge t_{j+1}}-B_{t\wedge t_j}).
\end{equation*}
With $I(\eta):=I_T(\eta)$, the mapping $I: \,M_G^{0,2}(0,T) \rightarrow \dbL_G^{2}(\Omega_T)$ is continuous and thus can be continuously extended to $M_G^2(0,T).$

The quadratic variation process of G-Brownian motion can be formulated in $\dbL_G^2(\Omega_t) $ by the continuous $d\times d$-symmetric-matrix-valued process defined by
\begin{equation}\label{G-quadra}
\langle B \rangle^G_t := B_t\otimes B_t-2 \int_0^t B_s\otimes d_GB_s, 
\end{equation}
whose diagonal is constituted of nondecreasing processes. Here, for $a,b\in\R^d$, the $d\times d$-symmetric matrix $a\otimes b$ is defined by $(a\otimes b)x=(a\cdot x)b$ for $x\in\R^d$, where $"\cdot"$ denotes the scalar product in  $\R^d$.

Define the mapping $\mathcal{J}:M_G^{0,1}(0,T) \mapsto \mathbb{L}_G^1(\Omega_T)$:
$$
\mathcal{J}=\int_0^T \eta_t d\langle B \rangle^G_t := \sum_{j=0}^{N-1} \eta_j (\langle B \rangle^G_{t_{j+1}}-\langle B\rangle^G_{t_j}).
$$
Then $\mathcal{J}$ can be uniquely extended to $\mathcal{Q}: M_G^1(0,T) \rightarrow \mathbb{L}_G^1(\Omega_T)$, where 
$$
\mathcal{Q}:=\int_0^T \eta_t d\langle B \rangle^G_t,\ \ \eta \in M_G^{1}(0,T).
$$

We have the following 'isometry' (formulated for the case $d=1$, for simplicity).
\begin{lemma} $($\cite{peng2007}$)$ \\
Assume $d=1$ and let $\eta \in M_G^2 (0,T)$. We have
$$  
\widehat{\E} \left[ \left( \int_0^T  \eta(s) d_GB_s  \right)^2   \right] = \widehat{\E} \left[ \int_0^T \eta^2(s) d \langle B \rangle^G_s \right].
$$
\end{lemma}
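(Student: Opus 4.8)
The plan is to follow Peng's standard three-step scheme: verify the identity on simple (elementary) processes by direct computation, and then pass to the limit by density of $M_G^{0,2}(0,T)$ in $M_G^2(0,T)$, using the continuity of the two sides of the identity with respect to the $M_G^2$-norm. First I would take $\eta\in M_G^{0,2}(0,T)$ of the form $\eta(s)=\sum_{j=0}^{N-1}\eta_j\mathbf{1}_{[t_j,t_{j+1})}(s)$ with $\eta_j\in\mathrm{Lip}(\Omega_{t_j})$, and expand
$$
\left(\int_0^T\eta(s)\,d_GB_s\right)^2=\sum_{j=0}^{N-1}\eta_j^2(B_{t_{j+1}}-B_{t_j})^2+2\sum_{i<j}\eta_i(B_{t_{i+1}}-B_{t_i})\,\eta_j(B_{t_{j+1}}-B_{t_j}).
$$
The main obstacle is that $\widehat{\E}$ is only sublinear, so one cannot simply take expectations term by term; instead one must exploit the independence of increments of $G$-Brownian motion together with the tower-type property of the conditional $G$-expectation $\widehat{\E}[\cdot\mid\Omega_{t_j}]$. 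Conditioning on $\Omega_{t_j}$ kills the cross terms because $\widehat{\E}[(B_{t_{j+1}}-B_{t_j})\mid\Omega_{t_j}]=0$ and the $\Omega_{t_j}$-measurable factors pull out; and it converts the diagonal term $\eta_j^2(B_{t_{j+1}}-B_{t_j})^2$ into $\eta_j^2(\langle B\rangle^G_{t_{j+1}}-\langle B\rangle^G_{t_j})$ using the fact that $\widehat{\E}[(B_{t_{j+1}}-B_{t_j})^2\mid\Omega_{t_j}]$ and the conditional expectation of the quadratic-variation increment agree (this is where \eqref{G-quadra} enters, via $\widehat{\E}[\langle B\rangle^G_{t_{j+1}}-\langle B\rangle^G_{t_j}\mid\Omega_{t_j}]=\widehat{\E}[(B_{t_{j+1}}-B_{t_j})^2\mid\Omega_{t_j}]$). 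I would need to check this reduction at the level of $\widehat{\E}$ rather than conditional $\widehat{\E}$, so I would iterate the conditioning from $j=N-1$ down to $j=0$; the sublinearity is harmless here since each step is an equality, not an inequality.

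Having established
$$
\widehat{\E}\left[\left(\int_0^T\eta(s)\,d_GB_s\right)^2\right]=\widehat{\E}\left[\int_0^T\eta^2(s)\,d\langle B\rangle^G_s\right]
$$
for elementary $\eta$, I would then extend it to arbitrary $\eta\in M_G^2(0,T)$ by approximation. Pick $\eta^n\in M_G^{0,2}(0,T)$ with $\|\eta^n-\eta\|_{M_G^2(0,T)}\to 0$. The left-hand side is continuous because $I:M_G^2(0,T)\to\mathbb{L}_G^2(\Omega_T)$ is (by construction, as recalled in the excerpt) a continuous extension of its restriction to elementary processes, so $I(\eta^n)\to I(\eta)$ in $\mathbb{L}_G^2(\Omega_T)$ and hence $\widehat{\E}[I(\eta^n)^2]=\|I(\eta^n)\|_{\mathbb{L}_G^2}^2\to\|I(\eta)\|_{\mathbb{L}_G^2}^2=\widehat{\E}[I(\eta)^2]$. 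For the right-hand side I would use the extension $\mathcal{Q}$ of $\mathcal{J}$ from $M_G^1(0,T)$ to control $\widehat{\E}[\int_0^T((\eta^n)^2-\eta^2)\,d\langle B\rangle^G_s]$: writing $(\eta^n)^2-\eta^2=(\eta^n-\eta)(\eta^n+\eta)$ and invoking the Cauchy--Schwarz inequality for the sublinear expectation together with the bound $\widehat{\E}[\int_0^T|\xi|\,d\langle B\rangle^G_s]\le C\,\widehat{\E}[\int_0^T|\xi|\,ds]$ (which holds because the diagonal entries of $\langle B\rangle^G$ have bounded density, $\Gamma$ being bounded), one gets that $(\eta^n)^2\to\eta^2$ in $M_G^1(0,T)$, hence $\widehat{\E}[\int_0^T(\eta^n)^2\,d\langle B\rangle^G_s]\to\widehat{\E}[\int_0^T\eta^2\,d\langle B\rangle^G_s]$. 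Passing to the limit on both sides of the elementary identity yields the claim. The only delicate point is the estimate comparing the $d\langle B\rangle^G$-integral with the $ds$-integral, which is standard in the $G$-framework and follows from $\mathrm{d}\langle B\rangle^G_s\le \overline\sigma^2\,\mathrm{d}s$ with $\overline\sigma^2=\sup_{\gamma\in\Gamma}\gamma\gamma^*$ in the scalar case $d=1$.
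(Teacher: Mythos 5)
The paper does not prove this lemma at all; it is quoted verbatim from Peng's work \cite{peng2007}, so there is no in-paper argument to compare against. Your proposal reconstructs Peng's standard proof (elementary processes plus density), and it is essentially correct. One point deserves sharpening: for the diagonal terms you justify replacing $\eta_j^2(B_{t_{j+1}}-B_{t_j})^2$ by $\eta_j^2(\langle B\rangle^G_{t_{j+1}}-\langle B\rangle^G_{t_j})$ by saying the two increments have the same conditional $G$-expectation. Under a sublinear expectation that is not by itself enough — you cannot swap a random variable for another with matching conditional mean inside a sum, since $\widehat{\E}$ is not additive. What actually makes the step an equality is that, by \eqref{G-quadra}, the difference equals $2\eta_j^2\int_{t_j}^{t_{j+1}}(B_s-B_{t_j})\,d_GB_s$, a \emph{symmetric} $G$-martingale increment $D$ satisfying $\widehat{\E}[D\mid\Omega_{t_j}]=\widehat{\E}[-D\mid\Omega_{t_j}]=0$; for such mean-certain terms the sublinear expectation is genuinely additive ($\widehat{\E}[X+D]=\widehat{\E}[X]$), which is the same mechanism that kills your cross terms. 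With that substitution of justification, the backward iteration over $j$ and your limit argument (continuity of $I$ on the left, the Cauchy--Schwarz inequality for $\widehat{\E}$ together with $d\langle B\rangle^G_s\le\bar\sigma^2\,ds$ on the right) go through as written.
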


\subsection{A dual representation of $G$-expectation}
Denis and Martini \cite{denis1} and Denis {\it et al.} \cite{denis2} prove the following dual representation of the $G$-expectation in terms of a weakly compact (tight) family $\mathcal{P}$ of possibly mutually singular probability measures on $(\Omega,\mathcal{B}(\Omega))$. This duality expresses the $G$-expectation as a robust expectation with respect to $\mathcal{P}$.
We refer to \cite{denis1} and \cite{denis2} for explicit constructions of  $\mathcal{P}$. Soner {\it et al.} \cite{soner 1, soner 2} perform an in-depth analysis of such a construction and its consequences on the $G$-stochastic analysis and  in particular the question of aggregation of processes. 

\begin{proposition}\label{denis-1}  $($\cite{denis1, denis2}$)$ 
There exists a family of weakly compact probability measures $\mathcal{P}$ on $(\Omega,\mathcal{B}(\Omega))$ such that for each $\xi \in {\mathbb L}_G^1(\Omega)$
\be\label{duality}
\widehat{\E}[\xi]=\sup_{\dbP \in \cP} \E^{\dbP}[\xi].
\ee
Moreover, the set function
$$
c(A):=\sup_{\dbP \in \cP} \dbP(A),\quad A\in\mathcal{B}(\Omega),
$$
defines a regular Choquet capacity.
\end{proposition}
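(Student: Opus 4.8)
The plan is to reconstruct the argument of Denis, Hu and Peng \cite{denis2}. \emph{Step 1 (construction of $\cP$).} Let $P_0$ be the Wiener measure, so that the coordinate process $B$ is a $P_0$-Brownian motion, and let $\Theta$ be the set of all $\Ff$-adapted, $\Gamma$-valued processes $\gamma$. For $\gamma\in\Theta$ set $X^\gamma_t:=\int_0^t\gamma_s\,dB_s$ and let $P_\gamma$ be the law of $X^\gamma$ on $(\Omega,\cB(\Omega))$ (it is supported on $\Omega$ since $X^\gamma$ is continuous and $X^\gamma_0=0$). Put $\cP_0:=\{P_\gamma:\gamma\in\Theta\}$ and let $\cP:=\overline{\cP_0}$ be its closure in the topology of weak convergence; since every $\xi\in\text{Lip}(\Omega)$ is bounded and weakly continuous, $P\mapsto\E^P[\xi]$ is weakly continuous, so $\sup_{P\in\cP_0}\E^P[\xi]=\sup_{P\in\cP}\E^P[\xi]$.

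\emph{Step 2 (duality on $\text{Lip}(\Omega)$).} Fix $\xi=\varphi(B_{t_1},\dots,B_{t_n})$. The inequality $\widehat{\E}[\xi]\ge\sup_{P\in\cP}\E^P[\xi]$ expresses that each $P_\gamma$ is a \emph{sub-$G$-expectation}: writing $\widehat{\E}[\xi]$ through its recursive definition via solutions of the $G$-heat equation and using $\gamma_s\gamma_s^*\in\Sigma$ together with \eqref{G}, a (mollified) It\^o formula gives $\E^{P_\gamma}[\xi]\le\widehat{\E}[\xi]$ for every $\gamma\in\Theta$. For the reverse inequality one invokes the stochastic control representation of the $G$-heat equation: its viscosity solution $u$ with initial datum $\psi\in\text{Lip}(\R^d)$ satisfies $u(t,x)=\sup_{\gamma\in\Theta}\E^{P_0}[\psi(x+X^\gamma_t)]$, which is exactly the Feynman--Kac formula for the HJB equation $\partial_t u=\sup_{\gamma\in\Gamma}\tfrac12\tr(\gamma\gamma^*D^2u)=G(D^2u)$, proved by a verification argument after mollifying $\psi$, using interior Krylov/Schauder estimates and approximating the formally optimal feedback control by admissible bounded adapted ones. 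Iterating this representation backward through $t_n,t_{n-1},\dots,t_1$ in the recursion defining $\widehat{\E}[\xi]$ and concatenating the (near-optimal) controls produces $\gamma^k\in\Theta$ with $\E^{P_{\gamma^k}}[\xi]\to\widehat{\E}[\xi]$, so $\sup_{P\in\cP}\E^P[\xi]\ge\widehat{\E}[\xi]$.

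\emph{Step 3 (extension and weak compactness).} On $\text{Lip}(\Omega)$ one has $\big|\sup_P\E^P[\xi]-\sup_P\E^P[\eta]\big|\le\sup_P\E^P[|\xi-\eta|]=\widehat{\E}[|\xi-\eta|]=\|\xi-\eta\|_{\dbL_G^1(\Omega)}$, hence both $\xi\mapsto\sup_P\E^P[\xi]$ and $\widehat{\E}$ are $\|\cdot\|_{\dbL_G^1}$-continuous; coinciding on the dense set $\text{Lip}(\Omega)$, they coincide on all of $\dbL_G^1(\Omega)$, which is \eqref{duality}. For weak compactness, the Burkholder--Davis--Gundy inequality and the boundedness of $\Gamma$ yield $\sup_{P\in\cP}\E^P[|B_t-B_s|^{2m}]\le C_m|t-s|^m$ for every $m\ge1$; by Kolmogorov's criterion $\cP_0$ is tight, hence relatively weakly compact by Prokhorov, and $\cP=\overline{\cP_0}$ is weakly compact.

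\emph{Step 4 (Choquet capacity).} Monotonicity, $c(\emptyset)=0$ and $c(\Omega)=1$ are clear. If $A_n\uparrow A$, then for every $P\in\cP$ one has $P(A)=\lim_nP(A_n)\le\lim_n c(A_n)$, so $c(A)=\sup_P P(A)\le\lim_n c(A_n)\le c(A)$, giving $c(A_n)\uparrow c(A)$. If $K_n\downarrow K$ with every $K_n$ compact, pick $P_n\in\cP$ with $P_n(K_n)\ge c(K_n)-1/n$; by weak compactness some subsequence $P_{n_j}$ converges weakly to $P\in\cP$, and since the $K_m$ are closed the Portmanteau theorem gives, for each fixed $m$, $P(K_m)\ge\limsup_j P_{n_j}(K_m)\ge\limsup_j\big(c(K_{n_j})-1/n_j\big)=\lim_n c(K_n)$; letting $m\to\infty$, $c(K)\ge P(K)=\lim_m P(K_m)\ge\lim_n c(K_n)\ge c(K)$, so $c(K_n)\downarrow c(K)$. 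By Choquet's capacitability theorem these three properties show that $c$ is a regular Choquet capacity (all Borel, indeed analytic, sets are capacitable), and inner regularity $c(A)=\sup\{c(K):K\subseteq A\ \text{compact}\}$ follows from the tightness of each Borel probability measure on the Polish space $\Omega$ by interchanging suprema. The principal obstacle is the reverse duality inequality in Step 2, which hinges on the stochastic control representation of the (only viscosity-regular) $G$-heat equation and on the admissible approximation of its optimal control; the compactness argument for decreasing sequences of compacts in Step 4 is the next most delicate point.
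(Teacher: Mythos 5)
The paper does not prove this proposition: it is quoted verbatim from Denis--Martini and Denis--Hu--Peng, with the reader referred to \cite{denis1, denis2} for the construction of $\cP$ and the proof. Your reconstruction follows exactly the route of those references (martingale laws $P_\gamma$ of $\int_0^\cdot\gamma_s\,dB_s$ with $\gamma\gamma^*\in\Sigma$, duality on $\text{Lip}(\Omega)$ via the stochastic control representation of the $G$-heat equation, extension by $\|\cdot\|_{\dbL_G^1}$-density, tightness via Kolmogorov's criterion, and the Portmanteau argument for continuity of $c$ along decreasing compacts) and is correct in outline, so it is consistent with the proof the paper is implicitly invoking.
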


This leads to the following (cf. \cite{denis1, soner1})
\begin{definition}\label{polar} A set $A\in \mathcal{B}(\Omega)$ is called polar if $c(A)=0$ or equivalently if
$\, \dbP(A)=0$ for all $\dbP\in\cP$.  We say that a property holds $\cP$- quasi-surely (q.s.) if it holds $\dbP$-almost-surely for all $\dbP\in\cP$ i.e. outside a polar set. A probability measure $\mathbb{P} $ is called absolutely continuous with respect to $\mathcal{P} $ if $\mathbb{P}(A)=0$ for all $A\in \mathcal{N}_{\mathcal{P}}.$
\end{definition}

Denote by $\mathcal{N}_{\mathcal{P}}:=\bigcap_{\mathbb{P} \in \mathcal{P}} \mathcal{N}^{\mathbb{P}}(\mathcal{F}_{\infty})$  the $\mathcal{P}$-polar sets. We shall use the following universal filtration $\mathbb{F}^{\mathcal{P}}$ for the possibly mutually singular probability measures $\{ \mathbb{P}, \mathbb{P} \in \mathcal{P} \}$ (cf. \cite{soner2}).
\begin{equation}\label{univ-filt}
\mathbb{F}^{\mathcal{P}}:= \{  \widehat{\mathcal{F}}^{\mathcal{P}}_t   \}_{t \geq 0} \quad\quad\text{where} \quad \widehat{\mathcal{F}}_t^{\mathcal{P}}:= \bigcap_{\mathbb{P} \in \mathcal{P}}(\mathcal{F}_t^{\mathbb{P}} \vee \mathcal{N}_{\mathcal{P}}) \quad \text{for} \quad t \geq 0.
\end{equation}

The dual formulation of the $G$-expectation suggests the following aspect of aggregation.
\begin{lemma}[Proposition 3.3, \cite{soner1}]\label{aggregation1}
Let $\eta\in M_G^2(0,T)$. Then, $\eta$ is It\^o-integrable for every $\dbP\in\cP$. Moreover, for every $t\in[0,T]$,
\begin{equation}\label{G-ito}
\int_0^t  \eta (s) d_GB_s=\int_0^t \eta (s) dB_s,\quad \dbP\as \quad\text{for every}\,\,\,\, \dbP\in\cP.
\end{equation}
where the right hand side is the usual It\^o integral. Consequently, the quadratic variation process $\langle B\rangle^G$ defined in \eqref{G-quadra} agrees with the usual quadratic variation process quasi-surely.
\end{lemma}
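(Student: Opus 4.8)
The plan is to prove \eqref{G-ito} first for elementary integrands, where both sides reduce to the same finite sum, and then to pass to the limit in $M_G^2(0,T)$ along a subsequence chosen so that it converges simultaneously quasi-surely (for the $G$-integral) and $\dbP$-almost surely (for the It\^o integral). First I would record the structure of $\cP$ that is actually needed: by the explicit construction of $\cP$ in \cite{denis1, denis2}, under every $\dbP\in\cP$ the canonical process $B$ is a continuous square-integrable $\dbP$-martingale whose quadratic variation $\langle B\rangle^{\dbP}$ is absolutely continuous with density taking values in $\Sigma=\{\gamma\gamma^*:\gamma\in\Gamma\}$; since $\Gamma$ is bounded there is a constant $c_\Gamma$ with $d\langle B\rangle^{\dbP}_s\le c_\Gamma\,ds$, and the same bound holds for $\langle B\rangle^G$ quasi-surely (a standard consequence of the boundedness of $\Gamma$ in $G$-calculus). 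Hence, for $\eta\in M_G^2(0,T)$, the duality \eqref{duality} gives
\[
\E^{\dbP}\Big[\int_0^T|\eta_s|^2\,d\langle B\rangle^{\dbP}_s\Big]\le c_\Gamma\,\E^{\dbP}\Big[\int_0^T|\eta_s|^2\,ds\Big]\le c_\Gamma\,\widehat{\E}\Big[\int_0^T|\eta_s|^2\,ds\Big]=c_\Gamma\|\eta\|^2_{M_G^2(0,T)}<\infty,
\]
so $\eta$ is It\^o-integrable under every $\dbP\in\cP$ and $t\mapsto\int_0^t\eta_s\,dB_s$ is a well-defined continuous $\dbP$-martingale. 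For $\eta\in M_G^{0,2}(0,T)$ of the form $\sum_j\eta_j\mathbb{1}_{[t_j,t_{j+1})}$ the $G$-integral $I_t(\eta)=\sum_j\eta_j(B_{t\wedge t_{j+1}}-B_{t\wedge t_j})$ coincides, path by path, with $\int_0^t\eta_s\,dB_s$, so \eqref{G-ito} is immediate in that case.

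Next, for general $\eta\in M_G^2(0,T)$ I would pick $\eta^n\in M_G^{0,2}(0,T)$ with $\|\eta^n-\eta\|_{M_G^2(0,T)}\to 0$. On the $G$-side, the isometry recalled above (together with $d\langle B\rangle^G_s\le c_\Gamma\,ds$, and, for $d>1$, the analogous maximal / Burkholder--Davis--Gundy inequality for $G$-stochastic integrals from \cite{peng2010}) yields $\widehat{\E}\big[\sup_{t\le T}|I_t(\eta^n)-I_t(\eta)|^2\big]\to 0$; extracting a subsequence, $I_\cdot(\eta^{n_k})\to I_\cdot(\eta)$ uniformly on $[0,T]$, quasi-surely, hence $\dbP$-a.s. for the fixed $\dbP$ under consideration. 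On the It\^o side, the It\^o isometry under $\dbP$ and the bound above give $\sup_{t\le T}\big|\int_0^t\eta^n_s\,dB_s-\int_0^t\eta_s\,dB_s\big|\to 0$ in $L^2(\dbP)$, hence $\dbP$-a.s. along a further subsequence. Since $I_t(\eta^{n_k})=\int_0^t\eta^{n_k}_s\,dB_s$ for all $t$ along that common subsequence, letting $k\to\infty$ gives $I_t(\eta)=\int_0^t\eta_s\,dB_s$ for all $t\in[0,T]$, $\dbP$-a.s.; as $\dbP\in\cP$ was arbitrary, \eqref{G-ito} follows.

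For the quadratic variation I would apply \eqref{G-ito} componentwise to the integrands $s\mapsto B^i_s$, $i=1,\dots,d$ (each lies in $M_G^2(0,T)$: approximate by $\sum_i B_{t^n_i}\mathbb{1}_{[t^n_i,t^n_{i+1})}$ and use $\widehat{\E}[|B_s-B_r|^2]\le c_\Gamma(s-r)$), so that $\int_0^t B_s\otimes d_GB_s=\int_0^t B_s\otimes dB_s$, $\dbP$-a.s. for every $\dbP\in\cP$. Combining this with It\^o's formula under $\dbP$, namely $B_t\otimes B_t=2\int_0^t B_s\otimes dB_s+\langle B\rangle^{\dbP}_t$, and with the definition \eqref{G-quadra} of $\langle B\rangle^G$ yields $\langle B\rangle^G_t=\langle B\rangle^{\dbP}_t$, $\dbP$-a.s., for every $\dbP\in\cP$, i.e.\ quasi-surely. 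The one genuinely delicate step is this simultaneous passage to the limit: $\dbL_G^2$-convergence only produces quasi-sure convergence of $I_\cdot(\eta^{n_k})$ along a subsequence, while the It\^o integrals converge in $L^2(\dbP)$ and hence $\dbP$-a.s. only along a possibly different one, so one must extract a common subsequence \emph{for each fixed} $\dbP$. This is legitimate precisely because the conclusion \eqref{G-ito} is itself a $\dbP$-by-$\dbP$ almost-sure identity rather than a single $\cP$-universal one — constructing such a universal version of each It\^o integral is exactly the aggregation problem treated in \cite{soner1, soner2}, for which $I_\cdot(\eta)$ serves as the canonical aggregator. A secondary, bookkeeping point is that Proposition \ref{denis-1} alone gives only the duality; the boundedness of the quadratic-variation density used throughout must be taken from the explicit construction of $\cP$ in \cite{denis1, denis2}.
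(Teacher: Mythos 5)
The paper does not prove this lemma at all: it is quoted verbatim as Proposition 3.3 of \cite{soner1} (with the quadratic-variation consequence taken from the same source and from \cite{denis2}), so there is no in-paper argument to compare against. Your reconstruction is the standard proof of that aggregation result and I see no gap in it: the only inputs you use --- that under each $\dbP\in\cP$ the canonical process is a continuous martingale with $d\langle B\rangle^{\dbP}_t/dt$ valued in the bounded set $\Sigma$, the duality $\E^{\dbP}\le\widehat{\E}$ on $\dbL^1_G$, the pathwise agreement of the two integrals on $M_G^{0,2}$, and the BDG/isometry estimates --- are all available, and the componentwise application of \eqref{G-ito} to $s\mapsto B_s$ together with It\^o's formula under $\dbP$ correctly delivers $\langle B\rangle^G=\langle B\rangle^{\dbP}$ $\dbP$-a.s.\ for every $\dbP$. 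Two small remarks. First, the subsequence gymnastics can be shortened: for fixed $\dbP$ and fixed $t$, both $I_t(\eta^n)$ and $\int_0^t\eta^n_s\,dB_s$ converge in $L^2(\dbP)$ (the former because $\E^{\dbP}[\,\cdot\,]\le\widehat{\E}[\,\cdot\,]$), they coincide for every $n$, and $L^2(\dbP)$-limits are unique up to $\dbP$-null sets; the identity for all $t$ simultaneously then follows from the $\dbP$-a.s.\ continuity of both sides, with no need to extract a common a.s.-convergent subsequence. Second, a pedantic point you implicitly absorb: $B_{t_i}$ is not literally in $\mathrm{Lip}(\Omega_{t_i})$ (the test functions there are bounded), so the claim $B_\cdot\in M_G^2(0,T)$ does require the truncation/approximation you sketch rather than being immediate from the definitions. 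Neither point affects correctness.
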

\no In the sequel, we will drop the notation $G$ from both the $G$-stochastic integral and the $G$-quadratic variation.

Among the stability results derived in \cite{denis2}, we quote the following which plays an important role in our analysis.  
\begin{lemma}[Lemma 29, \cite{denis2}] \label{denis-2} 
If $\{\dbP_n\}_{n=1}^{\infty}\subset\cP$ converges weakly to $\dbP\in\cP$. Then for each $\xi \in {\mathbb L}_G^1(\Omega_T)$, 
${\E}^{\dbP_n}[\xi]\to {\E}^{\dbP}[\xi]$.
\end{lemma}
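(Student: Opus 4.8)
The plan is to run a density argument: reduce to cylinder functionals, on which weak convergence of measures gives convergence of expectations essentially by definition, and then control the approximation error uniformly in $n$ using the dual representation \eqref{duality}.

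First I would settle the case $\xi\in\text{Lip}(\Omega)$, say $\xi=\varphi(B_{t_1},\dots,B_{t_n})$ with $\varphi$ bounded and Lipschitz: since each coordinate map $\omega\mapsto\omega(t_i)$ is continuous for the topology of uniform convergence on compacts, $\xi$ is a bounded continuous functional on the Polish space $\Omega$, so $\dbP_n\to\dbP$ weakly yields $\E^{\dbP_n}[\xi]\to\E^{\dbP}[\xi]$ at once. For a general $\xi\in{\mathbb L}_G^1(\Omega_T)$ and $\varepsilon>0$, I would use that ${\mathbb L}_G^1(\Omega_T)\subset{\mathbb L}_G^1(\Omega)$ and that ${\mathbb L}_G^1(\Omega)$ is by definition the $\widehat{\E}[|\cdot|]$-closure of $\text{Lip}(\Omega)$ to pick $\xi_\varepsilon\in\text{Lip}(\Omega)$ with $\widehat{\E}[|\xi-\xi_\varepsilon|]<\varepsilon/3$. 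Since $|\xi-\xi_\varepsilon|\in{\mathbb L}_G^1(\Omega)$ as well, Proposition \ref{denis-1} applies to it, and because $\dbP$ and all $\dbP_n$ belong to $\cP$, the duality gives the uniform bounds $\sup_n\E^{\dbP_n}[|\xi-\xi_\varepsilon|]\le\widehat{\E}[|\xi-\xi_\varepsilon|]<\varepsilon/3$ and $\E^{\dbP}[|\xi-\xi_\varepsilon|]<\varepsilon/3$. Then I would split
\[
\big|\E^{\dbP_n}[\xi]-\E^{\dbP}[\xi]\big|\ \le\ \E^{\dbP_n}\big[|\xi-\xi_\varepsilon|\big]+\big|\E^{\dbP_n}[\xi_\varepsilon]-\E^{\dbP}[\xi_\varepsilon]\big|+\E^{\dbP}\big[|\xi-\xi_\varepsilon|\big],
\]
invoke the first step on $\xi_\varepsilon$ to find $N$ making the middle term $<\varepsilon/3$ for $n\ge N$, and conclude $|\E^{\dbP_n}[\xi]-\E^{\dbP}[\xi]|<\varepsilon$ for $n\ge N$.

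The step I expect to be the crux is the uniformity in $n$ of the ${\mathbb L}_G^1$-approximation, which is what lets one interchange the limit $n\to\infty$ with the passage $\xi_\varepsilon\to\xi$; this is supplied precisely by the fact that the sublinear expectation $\widehat{\E}=\sup_{\dbP\in\cP}\E^{\dbP}$ dominates every $\E^{\dbP_n}$ simultaneously, and it is here that the hypothesis $\{\dbP_n\}_n\subset\cP$ (not merely $\dbP_n$ converging to an element of $\cP$) is genuinely used. A minor point to verify along the way is that $|\xi-\xi_\varepsilon|$ indeed lies in ${\mathbb L}_G^1(\Omega)$ so that the duality can be invoked for it; this follows from monotonicity of $\widehat{\E}$ and $\big||a|-|b|\big|\le|a-b|$, showing that $Y\mapsto|Y|$ is $\|\cdot\|_{{\mathbb L}_G^1}$-nonexpansive and preserves $\text{Lip}(\Omega)$, i.e. that ${\mathbb L}_G^1(\Omega)$ is a Banach lattice. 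If one preferred to avoid even this, the whole argument could be run through the quasi-continuity description of ${\mathbb L}_G^1$ together with the tightness of $\cP$ to handle the tails, but the norm-density route is the most economical given what is already available.
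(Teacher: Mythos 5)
The paper does not actually prove this statement: it is imported verbatim as Lemma 29 of Denis--Hu--Peng \cite{denis2}, so there is no in-paper proof to compare against. Your argument is correct and is essentially the proof given in that reference --- approximate $\xi$ in the $\widehat{\E}[|\cdot|]$-norm by a bounded continuous cylinder functional, apply weak convergence to the approximant, and control the two error terms uniformly in $n$ via the dual representation \eqref{duality} --- and you correctly flag and dispose of the one point needing care, namely that $|\xi-\xi_\varepsilon|$ lies in ${\mathbb L}_G^1(\Omega)$ so that the duality may be invoked for it.
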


Considering the properties of the quadratic variation process $\langle B \rangle $ in the G-framework  and the dual formulation of the G-expectation, we have the following Burkholder-Davis-Gundy-type estimates, formulated in one dimension for simplicity.

\begin{lemma} $($\cite{gao2009}$)$ \label{G-BDG}
Assume $d=1$. For each $p \geq 2 $ and $ \eta \in M_G^p(0,T)$, then there exists some constant $C_p$ depending only on $p$ and $T$ such that
$$
\widehat{\E} \left[ \sup_{s \leq u \leq t } \left| \int_s^u \eta_r dB_r \right|^p  \right] \leq C_p  \widehat{\E}\left[\left(\int_s^t \vert \eta_r \vert^2 dr \right)^{p/2}\right]\le C_p|t-s|^{\frac{p}{2}-1}\int_s^t\widehat{\E}[\vert \eta_r \vert^p|] dr.
$$

If $\bar{\sigma}$ is  a positive constant such that  $\frac{d\langle B \rangle_t}{dt}\le \bar{\sigma}$ quasi-surely. Then, for each $p \geq 1 $ and $ \eta \in M_G^p(0,T),$
$$
\widehat{\E} \left[\sup_{ s \leq u \leq t } \left|\int_s^u \eta_r d \langle B \rangle_r \right|^p  \right] \leq \bar{\sigma}^p|t-s|^{p-1} \int_s^t \hat{\mathbb{E}}[\vert \eta_r  \vert^p]dr.
$$
\end{lemma}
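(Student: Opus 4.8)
The plan is to transfer both estimates to classical statements holding under each individual $\dbP\in\cP$, using the dual representation of the $G$-expectation (Proposition~\ref{denis-1}) together with the aggregation property of the $G$-stochastic integral and of the $G$-quadratic variation (Lemma~\ref{aggregation1}). Note first that, since $\Gamma$ is bounded, there is a constant $\bar\sigma>0$ (depending only on $\Gamma$) with $d\langle B\rangle_r\le\bar\sigma\,dr$ quasi-surely, hence $\dbP$-a.s. for every $\dbP\in\cP$; this $\bar\sigma$ gets absorbed into the constant $C_p$ in the first estimate. I would argue the first part first for elementary integrands $\eta\in M_G^{0,p}(0,T)$ and then pass to the limit, since the one genuinely $G$-specific delicate point is that $\sup_{s\le u\le t}\big|\int_s^u\eta_r\,dB_r\big|$ must be a well-defined element of $\dbL_G^p(\Omega_T)$ to which \eqref{duality} applies.

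For the estimate on the stochastic integral, fix $\eta\in M_G^{0,p}(0,T)$ and $\dbP\in\cP$. By Lemma~\ref{aggregation1}, the process $M_u:=\int_s^u\eta_r\,dB_r$ agrees $\dbP$-a.s. with the It\^o integral of $\eta$ against $B$ under $\dbP$, and is a continuous $\dbP$-(local) martingale with quadratic variation $\langle M\rangle_u=\int_s^u\eta_r^2\,d\langle B\rangle_r$. The classical Burkholder--Davis--Gundy inequality applied under $\dbP$, with a universal constant $c_p$, gives
$$
\E^{\dbP}\Big[\sup_{s\le u\le t}|M_u|^p\Big]\le c_p\,\E^{\dbP}\Big[\Big(\int_s^t\eta_r^2\,d\langle B\rangle_r\Big)^{p/2}\Big]\le c_p\bar\sigma^{p/2}\,\E^{\dbP}\Big[\Big(\int_s^t\eta_r^2\,dr\Big)^{p/2}\Big]\le c_p\bar\sigma^{p/2}\,\widehat\E\Big[\Big(\int_s^t\eta_r^2\,dr\Big)^{p/2}\Big],
$$
using $d\langle B\rangle_r\le\bar\sigma\,dr$ in the middle and \eqref{duality} at the end. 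Taking the supremum over $\dbP\in\cP$ on the left and invoking \eqref{duality} once more yields the first inequality with $C_p=c_p\bar\sigma^{p/2}$. Both sides being continuous in $\eta$ for the $M_G^p(0,T)$-norm (the left side via the continuity of the integral map into $\dbL_G^p$ and the standard fact that $\eta\mapsto\sup_{s\le u\le t}|\int_s^u\eta\,dB|$ is $\dbL_G^p$-continuous), the inequality extends to all $\eta\in M_G^p(0,T)$. The remaining step in the chain is deterministic: for $p\ge2$, convexity of $x\mapsto x^{p/2}$ and Jensen's inequality give $\big(\int_s^t\eta_r^2\,dr\big)^{p/2}\le|t-s|^{p/2-1}\int_s^t|\eta_r|^p\,dr$ pathwise, and applying $\widehat\E$ together with its sub-additivity and a Riemann-sum approximation yields $\widehat\E\big[\int_s^t|\eta_r|^p\,dr\big]\le\int_s^t\widehat\E[|\eta_r|^p]\,dr$.

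For the estimate on the integral against $\langle B\rangle$ no martingale theory is needed. Using $d\langle B\rangle_r\le\bar\sigma\,dr$ quasi-surely,
$$
\sup_{s\le u\le t}\Big|\int_s^u\eta_r\,d\langle B\rangle_r\Big|\le\int_s^t|\eta_r|\,d\langle B\rangle_r\le\bar\sigma\int_s^t|\eta_r|\,dr\qquad\text{q.s.},
$$
so Jensen's inequality (for $p\ge1$) gives $\big(\sup_{s\le u\le t}\big|\int_s^u\eta_r\,d\langle B\rangle_r\big|\big)^p\le\bar\sigma^p|t-s|^{p-1}\int_s^t|\eta_r|^p\,dr$ quasi-surely; applying $\widehat\E$ and the Riemann-sum commutation above produces the stated bound $\bar\sigma^p|t-s|^{p-1}\int_s^t\widehat\E[|\eta_r|^p]\,dr$.

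The step I expect to be the main obstacle is making the first part rigorous for a general $\eta\in M_G^p(0,T)$: one needs that $\int_0^\cdot\eta_r\,dB_r$ admits a $\dbP$-a.s. continuous version for each $\dbP\in\cP$, that its running maximum belongs to $\dbL_G^p(\Omega_T)$, and that \eqref{duality} genuinely applies to this functional so that $\sup_{\dbP\in\cP}$ may be exchanged with the supremum over time. These are standard but nontrivial facts of $G$-stochastic calculus; the safe route is to establish the inequality for elementary integrands, where the exchange is transparent, and then close the argument by $M_G^p(0,T)$-density and the continuity of both sides.
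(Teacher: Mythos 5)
The paper does not prove this lemma at all---it is quoted verbatim from Gao (2009)---so there is no internal proof to compare against; your reconstruction therefore has to stand on its own, and it does. The route you take (aggregate the $G$-integral and $\langle B\rangle$ under each $\dbP\in\cP$ via Lemma \ref{aggregation1}, apply the classical Burkholder--Davis--Gundy inequality to the resulting continuous $\dbP$-local martingale, dominate $d\langle B\rangle_r$ by $\bar\sigma\,dr$, and then pass to $\widehat{\E}$ through the dual representation \eqref{duality}) is exactly the standard way these estimates are obtained in the quasi-sure literature, and your pathwise treatment of the $d\langle B\rangle$-integral together with the H\"older/Jensen step $\bigl(\int_s^t\eta_r^2\,dr\bigr)^{p/2}\le|t-s|^{p/2-1}\int_s^t|\eta_r|^p\,dr$ is correct. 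You have also correctly identified the only genuinely delicate point: \eqref{duality} is stated for elements of $\mathbb{L}^1_G(\Omega)$, so one must know that $\sup_{s\le u\le t}\bigl|\int_s^u\eta_r\,dB_r\bigr|^p$ and $\bigl(\int_s^t\eta_r^2\,dr\bigr)^{p/2}$ lie in that space, and that $\widehat{\E}\bigl[\int_s^t|\eta_r|^p\,dr\bigr]\le\int_s^t\widehat{\E}[|\eta_r|^p]\,dr$; all three are transparent for elementary integrands (where the running supremum is a quasi-continuous functional of finitely many increments and the time integral is a finite sum, so sub-additivity applies directly) and extend by $M_G^p$-density exactly as you propose, with the density step itself bootstrapped from the elementary-integrand inequality. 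The only cosmetic discrepancy is that your constant $C_p=c_p\bar\sigma^{p/2}$ also depends on the fixed set $\Gamma$ through $\bar\sigma$, not just on $p$ and $T$ as the statement asserts, but since $\Gamma$ is part of the ambient $G$-structure this is harmless.
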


\section{The space of relaxed controls}
Let $(U,d)$ be a separable metric space and $\cP(U)$ be the space of probability measures on the set $U$ endowed with its Borel $\sigma$-algebra $\mathcal{B}(U)$. The class $\M([0,T]\times U)$ of relaxed controls  we consider in this paper is a subset of the set $\dbM([0,T]\times U)$ of Radon measures $\nu(dt,da)$ on $[0,T]\times U$ equipped with the topology of stable convergence of measures, whose projections on
$[0,T]$ coincide with the Lebesgue measure $dt$, and whose projection on $U$ coincide with some probability measure $\mu_t(da)\in\cP(U)$ i.e. $\nu(da,dt):=\mu_t(da)dt.$
The topology of stable convergence of measures is the coarsest topology which makes the mapping 
$$
q\mapsto \int_0^T\int_U \varphi(t,a)q(dt,da)
$$ 
continuous, for all bounded measurable functions $\varphi(t,a)$ such that for fixed $t$, $\varphi(t,\cdot)$ is continuous. 

Equipped with this topology, $\dbM:=\dbM([0,T]\times U)$ is a separable metrizable space. Moreover, it is compact whenever $U$ is compact. The topology of stable convergence of measures implies the topology of weak convergence of measures. For further details see \cite{elkaroui1987} and \cite{elkaroui1988}.

Next, we introduce the class of relaxed stochastic controls on $(\Omega_T,\mathcal{H}, \widehat{\E})$, where
$\mathcal{H}$ is a vector lattice of real functions on $\Omega$ such that $\text{Lip}(\Omega_T)\subset \mathcal{H}$.

\begin{definition}[{\bf Relaxed stochastic control}] 
A relaxed stochastic control (or simply a relaxed control) on 
$(\Omega_T,\text{Lip}(\Omega_T),\widehat{\E})$ is  a random measure $q(\omega, dt,da)=\mu_t(\omega,da)dt$ such that for each subset $A\in \mathcal{B}(U)$, the process $(\mu_t(A))_{t\in[0,T]}$ is $\mathbb{F}^{\cP}$-progressively measurable i.e. for every $t\in[0,T]$, the mapping $[0,t]\times\Omega\rightarrow [0,1]$ defined by $(s,\omega)\mapsto \mu_s(\omega,A)$ is $\mathcal{B}([0,t])\otimes \widehat{\mathcal{F}}^{\cP}_t$-measurable. In particular, the process $(\mu_t(A))_{t\in[0,T]}$ is adapted to the {\it universal } filtration $\mathbb{F}^{\cP}$ given by \eqref{univ-filt}. We denote by $\cR$ the class of relaxed stochastic controls.
\end{definition}

The set $\U([0,T])$  of 'strict' controls constituted of  $\mathbb{F}^{\cP}$-adapted processes $u$ taking values in the set $U$, embeds into the set $\cR$ of relaxed controls through the mapping 
\be\label{embed}
\Phi:\quad  \U([0,T])\ni u\mapsto \Phi(u)(dt,da)=\delta_{u(t)}(da)dt\in\cR. 
\ee

The next lemma which extends the celebrated Chattering Lemma, states that each relaxed control in $\cR$ can be approximated with a sequence of strict controls from  $\U([0,T])$. 

\begin{lemma}[{\bf G-Chattering Lemma}] \label{G-chat}
Let $(U,d)$ be a separable metric space and assume that $U$ is a compact set. Let $(\mu_t)_t$ be an $\mathbb{F}^{\cP}$ -progressively measurable process with values in $\cP(U)$. Then there exists a sequence $(u_n(t))_{n\ge 0}$ of $\mathbb{F}^{\cP}$-progressively measurable processes with values in $U$ such that the sequence of random  measures $\d_{u_n(t)}(da)dt$ converges in the sense of stable convergence (thus, weakly) to  $\m_t(da)dt\,\,$ quasi-surely. 
\end{lemma}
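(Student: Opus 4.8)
The plan is to mimic the classical chattering construction (as in El Karoui et al.) and then upgrade the almost-sure convergence obtained under the Wiener measure to quasi-sure convergence by invoking the structure of the family $\cP$. First I would reduce to a dense countable subclass: since $U$ is compact metric, the space $C(U)$ of continuous functions is separable, so fix a countable dense set $\{\varphi_k\}_{k\ge 1}\subset C(U)$ with $\|\varphi_k\|_\infty\le 1$. Stable convergence of $\d_{u_n(t)}(da)\,dt$ to $\m_t(da)\,dt$ is equivalent, by a standard argument, to
\begin{equation*}
\int_0^T g(t)\,\varphi_k(u_n(t))\,dt \longrightarrow \int_0^T g(t)\int_U \varphi_k(a)\,\m_t(da)\,dt
\end{equation*}
for every $k$ and every $g$ in a countable dense subset of $L^1([0,T])$, so it suffices to build a single sequence $(u_n)$ realizing all these convergences simultaneously off a polar set.

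Next I would carry out the discretization. For each $n$, partition $[0,T]$ into $n$ subintervals $[t_i^n,t_{i+1}^n)$ of length $T/n$. On each such interval one wants to "chatter" the measure $\bar\mu_i^n(\omega,da):=\frac{n}{T}\int_{t_i^n}^{t_{i+1}^n}\m_s(\omega,da)\,ds\in\cP(U)$ by a fast alternation of Dirac masses. Concretely, using measurable selection (the measurable version of the fact that any probability measure on compact $U$ is a weak-$*$ limit of finitely supported empirical measures), choose, measurably in $\omega$, points $a_{i,1}^n(\omega),\dots,a_{i,m_n}^n(\omega)\in U$ and a sub-subdivision of $[t_i^n,t_{i+1}^n)$ so that the process $u_n$ which equals $a_{i,j}^n(\omega)$ on the $j$-th piece satisfies, for each fixed $\omega$, the required asymptotics against the $\varphi_k$'s. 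Measurability of the selections with respect to $\widehat{\cF}^{\cP}_{t_i^n}$ is where the $\mathbb{F}^{\cP}$-progressive measurability hypothesis on $(\m_t)$ is used: $\bar\mu_i^n$ is $\widehat{\cF}^{\cP}_{t_i^n}$-measurable, hence so can the selections be taken, which makes $u_n$ an $\mathbb{F}^{\cP}$-progressively measurable $U$-valued process, i.e. an element of $\U([0,T])$.

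Then comes the convergence step. For a fixed probability measure $\dbP\in\cP$, the pair $(\omega,t)\mapsto \m_t(\omega,da)$ together with the above construction is exactly the classical chattering setup on the probability space $(\Omega,\widehat{\cF}^{\dbP}_T,\dbP)$, so $\d_{u_n(t)}(da)\,dt\to\m_t(da)\,dt$ stably, $\dbP$-a.s.; equivalently, the exceptional set $N_\dbP$ where convergence fails has $\dbP(N_\dbP)=0$. A priori $N_\dbP$ depends on $\dbP$, but because the sequence $(u_n)$ has been constructed once and for all (independently of $\dbP$) and the limiting identities are pathwise statements involving only the Lebesgue–a.e.\ defined process $(\m_t)$ and deterministic test functions $g,\varphi_k$, the exceptional set can be taken to be $N:=\bigcup_{k}\bigcup_{g}\{\omega:\ \text{the $k$-th, $g$-th limit fails}\}$, a single Borel (in fact $\widehat{\cF}^{\cP}_T$-measurable) set that does not depend on $\dbP$ and satisfies $\dbP(N)=0$ for every $\dbP\in\cP$. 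By Definition \ref{polar} this means $c(N)=0$, i.e. $N$ is polar, so $\d_{u_n(t)}(da)\,dt\to\m_t(da)\,dt$ stably, quasi-surely. Finally, recall from Section 3 that on the compact metrizable space $\dbM([0,T]\times U)$ stable convergence implies weak convergence, which gives the parenthetical assertion.

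The main obstacle is the uniformity of the exceptional set across the mutually singular measures in $\cP$: one must be careful that the chattering points $a_{i,j}^n(\omega)$ and the sub-subdivisions are chosen by a single measurable recipe, not adapted to a particular $\dbP$, so that the "bad set" is genuinely $\dbP$-independent; the measurable-selection bookkeeping needed to guarantee both this $\dbP$-independence and $\mathbb{F}^{\cP}$-progressive measurability simultaneously is the technical heart of the argument. A secondary point is reducing stable convergence to countably many test integrals so that a countable union of null sets remains polar — this uses separability of $C(U)$ (hence compactness of $U$) and of $L^1([0,T])$ in an essential way.
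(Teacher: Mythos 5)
Your proposal is correct and follows essentially the same route as the paper, whose proof simply observes that the classical pathwise chattering construction of Fleming--Nisio / El~Karoui \emph{et al.} produces the strict controls by a deterministic recipe applied to the path of $(\mu_t)$, so that $\mathbb{F}^{\cP}$-progressive measurability is inherited and the exceptional set is $\dbP$-independent, hence polar. The only cosmetic difference is that you invoke a measurable selection of $\omega$-dependent points $a_{i,j}^n(\omega)$, whereas the classical construction fixes a deterministic finite partition of $U$ with deterministic representative points and lets only the time allocation depend on $\omega$ through the averaged masses $\mu_s(\omega,B_k)$ --- which makes the $\dbP$-independence you rightly worry about automatic and dispenses with selection theorems altogether.
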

\begin{proof} Given the $\mathbb{F}^{\cP}$-progressively measurable relaxed control $\mu$, the detailed pathwise construction  of the approximating sequence $(\delta_{u_n(t)}(da)dt)_{n\ge 0}$ of $\mu_t(da)dt$ in \cite{fleming} (Lemma after Theorem 3) or \cite{elkaroui1988} (Theorem 2.2) extends easily to  make the strict controls $(u_n)_n$  $\,\mathbb{F}^{\cP}$-progressively measurable. \end{proof}

\section{G-Relaxed stochastic optimal control}
In this section we establish existence of a minimizer of the relaxed performance functional
\begin{equation}\label{J}\begin{array}{lll}
J(\mu)=\widehat\E\left[\int_0^T\int_U f(t,x^{\mu}(t),a)\mu_t(da)\,dt+h(x^{\mu}(T))\right]
\end{array}
\end{equation}
over the set $\cR$ of relaxed controls for the relaxed G-sde (written in vector form)
\begin{align}\label{g-sde-r} 
\left\{\begin{array}{lll} 
dx^{\mu}(t)=\sigma(t,x^{\mu}(t))dB_t+\int_U b(t,x^{\mu}(t),a)\m_t(da)dt +\int_U\gamma(t,x^{\mu}(t),a)\m_t(da)d\langle B\rangle_t, \\ x^{\mu}(0)=x.  
\end{array}
\right.
\end{align}
where 
$$
b: [0,T] \times\mathbb{R}^d   \times  U \rightarrow \mathbb{R}^d,\,\,  \sigma, \gamma :[0,T] \times \mathbb{R}^d \times U \rightarrow \mathbb{R}^{d \times d},\,\, 
 f: [0,T] \times \mathbb{R}^d \times  U \rightarrow \mathbb{R},\,\,  h: \R^{d}\rightarrow \R 
$$ 
are deterministic functions.

When  $\mu=\delta_u, u\in\U([0,T])$, the process $x^{\delta_u}:=x^u$ simply solves the following G-sde 
\begin{align}\label{g-sde} 
\left\{\begin{array}{lll} 
dx^{u}(t)=\sigma(t,x^{u}(t))dB_t+b(t,x^{u}(t),u(t))dt +\gamma(t,x^{u}(t),u(t))d\langle B\rangle_t, \\ x^{u}(0)=x.  
\end{array}
\right.
\end{align}
Furthermore, in view of the embedding \eqref{embed}, we may write $J(u)=J(\delta_u)$.

We make the following 

\ms

{\bf Assumptions}
\begin{itemize}
\item[(H1)] The functions $b,\gamma$ and $\sigma $ are continuous and bounded. Moreover, they are Lipschitz continuous with respect to the space variable uniformly in $(t,u)$.

\item[(H2)] the functions $f$ and $h$ are continuous and bounded.
\end{itemize}

\ms
From \cite{peng2010} it follows that under assumption (H1), for each $\mu\in\cR$, the G-sde \eqref{g-sde-r} admits a unique solution $x^{\mu}\in M_G^2(0,T)$ which  satisfies
\begin{equation}
\widehat\E[\sup_{0\le t\le T}|x^{\mu}(t)|^2]<\infty.
\end{equation}
Moreover, for each $\mu\in\cR$,
\begin{equation}\label{rate-J}
\chi^{\mu}:=\int_0^T\int_U f(t,x^{\mu}(t),a)\mu_t(da)\,dt+h(x^{\mu}(T))\in \mathbb{L}^1_{G}(\Omega_T).
\end{equation}
\begin{remark}\label{H1-H2}
Assumptions (H1) and (H2) are strong and can be made much weaker. We impose them to keep the presentation of the main results simple.    
\end{remark}

In this section we prove the following theorem which constitutes the main result of the paper.
\bt\label{opt-relax}
We have
\be\label{inf}
\inf_{u\in \U[0,T]}J(u)\ \ =\inf_{\mu \in \cR}J(\mu ).
\ee
Moreover, there exists a relaxed control  $\widehat{\mu}\in \cR$ such that
\be\label{min}
 J(\widehat{\mu})=\min_{\mu \in \cR}J(\mu).
\ee
\et

Recall that 
\be\label{J-J-P}
J(\mu)=\sup_{\dbP \in \cP}J^{\dbP}(\m),
\ee
where the relaxed performance functional associated to each $\dbP\in \cP$ is given by
\begin{equation}\label{P-J_r}\begin{array}{lll}
J^{\dbP}(\mu)=\E^{\dbP}\left[\int_0^T\int_U f(t,x^{\mu}(t),a)\mu_t(da)\,dt+h(x^{\mu}(T))\right].
\end{array}
\end{equation}

To prove \eqref{inf}, we use the G-Chattering Lemma \eqref{G-chat} and stability results for the G-sde \eqref{g-sde-r}. In view of the G-Chattering Lemma, given a relaxed control $\mu\in\cR$, there exists a sequence $(u^n)_n\in \U([0,T])$ of strict controls such that $\delta_{u^n(t)}(da)dt$ converges weakly to $\mu_t(da)dt$ quasi-surely i.e. $\dbP\as$, for all $\dbP\in\cP$. The proof of \eqref{min} is based of existence of an optimal relaxed control for each $\dbP\in\cP$ and a tightness argument.

Denote by $x^{n}$ the solution of the G-sde associated with $u^n$ (or $\delta_{u^n(t)}(da))$ defined by
\be\label{g-x-n} 
\left\{\begin{array}{lll} 
dx^n(t)=\sigma(t,x^n(t))dB_t+b(t,x^n(t),u^n(t))dt+\gamma(t,x^n(t),u^n(t))d\langle B\rangle_t,  \\ x^n(0)=x. 
\end{array}
\right.
\ee
We have the following stability results for both the G-sde \eqref{g-sde-r} and the performance $J^{\dbP}$, for every $\dbP\in\cP$. 

\begin{lemma} \label{p-x-J-n} For every $\dbP\in\cP$, it holds that 
\be\label{p-x-n}
\lim_{n \to \infty }\E^{\dbP}\left[\sup_{0\leq t\leq T}\left\vert
x^n(t)-x^{\mu}(t)\right\vert ^{2}\right]=0
\ee
and
\be\label{p-J-n}
\lim_{n \to \infty }J^{\dbP}(u^n)=J^{\dbP}(\mu).
\ee
Moreover,
\be\label{p-u-mu}
\inf_{u\in\U[0,T]}J^{\dbP}(u)=\inf_{\mu\in\cR}J^{\dbP}(\mu),
\ee
and there exists a relaxed control $\hat\mu_{\dbP}\in\cR$ such that 
\be\label{p-min}
J^{\dbP}(\hat\mu_{\dbP})=\inf_{\mu\in\cR}J^{\dbP}(\mu).
\ee
\end{lemma}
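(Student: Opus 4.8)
The plan is to establish Lemma \ref{p-x-J-n} in four stages, treating each $\dbP\in\cP$ as a fixed classical probability space on which $B$ is a continuous martingale with quadratic variation $\langle B\rangle$ bounded by $\bar\sigma$ (by Lemma \ref{aggregation1}, the $G$-integrals coincide with the usual It\^o integrals $\dbP$-a.s., so we are in a standard stochastic-calculus setting).

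First, I would prove the pathwise stability estimate \eqref{p-x-n}. Subtract the equations for $x^n$ and $x^\mu$, take $\sup_{0\le s\le t}|\cdot|^2$, and apply the $\dbP$-versions of the BDG and H\"older inequalities (the classical analogues of Lemma \ref{G-BDG}, valid since $\langle B\rangle_t\le\bar\sigma t$) to the three differences. The diffusion term $\sigma(s,x^n)-\sigma(s,x^\mu)$ and the drift terms' $x$-dependence are handled by the Lipschitz hypothesis (H1), producing a term $C\int_0^t\E^{\dbP}[\sup_{r\le s}|x^n-x^\mu|^2]\,ds$. The genuinely new contribution is the control discrepancy: the terms $\int_0^t\big(b(s,x^\mu(s),u^n(s))-\int_U b(s,x^\mu(s),a)\mu_s(da)\big)ds$ and the analogous $\gamma$-term. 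Call these $\varepsilon_n^b(t),\varepsilon_n^\gamma(t)$. Because $b,\gamma$ are continuous and bounded in $a$ (H1) and $\delta_{u^n(s)}(da)ds\to\mu_s(da)ds$ in the stable topology $\dbP$-a.s. (G-Chattering Lemma \ref{G-chat}), for each fixed $\omega$ we get $\sup_{0\le t\le T}|\varepsilon_n^b(t)(\omega)|\to 0$ and similarly for $\gamma$; boundedness plus dominated convergence then gives $\E^{\dbP}[\sup_t|\varepsilon_n^b(t)|^2]\to 0$. Gr\"onwall's lemma closes the argument and yields \eqref{p-x-n}.

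Second, \eqref{p-J-n} follows from \eqref{p-x-n}: write $J^{\dbP}(u^n)-J^{\dbP}(\mu)$ as the expectation of $\big(h(x^n(T))-h(x^\mu(T))\big)$ plus the running-cost difference. The terminal part vanishes by continuity and boundedness of $h$ (H2) together with $L^2$-convergence of $x^n(T)$ and dominated convergence. For the running cost, split into $\int_0^T\int_U\big(f(s,x^n(s),a)-f(s,x^\mu(s),a)\big)\delta_{u^n(s)}(da)ds$, which vanishes by uniform continuity of $f$ on compacts, boundedness, and \eqref{p-x-n}; plus $\int_0^T\int_U f(s,x^\mu(s),a)\big(\delta_{u^n(s)}(da)-\mu_s(da)\big)ds$, which vanishes $\dbP$-a.s. by stable convergence (since $a\mapsto f(s,x^\mu(s),a)$ is bounded continuous) and then in $L^1(\dbP)$ by dominated convergence. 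Taking infima over $n$ and using $\U[0,T]\subset\cR$ via $\Phi$ gives $\inf_{u}J^{\dbP}(u)\ge\inf_{\mu}J^{\dbP}(\mu)\ge\inf_u J^{\dbP}(u)$, hence \eqref{p-u-mu}.

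Third, for \eqref{p-min} I would run the direct method on the fixed space $(\Omega,\widehat{\mathcal F}_T^{\dbP},\dbP)$. Take a minimizing sequence $(\mu^k)_k\subset\cR$ for $J^{\dbP}$; since $U$ is compact, $\dbM([0,T]\times U)$ is compact in the stable topology, so the laws of $\mu^k$ are tight and, along a subsequence, converge; via a Skorokhod-type construction (or directly, using that each $\mu^k$ is an $\mathbb F^{\cP}$-adapted random measure) one extracts a limit relaxed control $\hat\mu_{\dbP}\in\cR$. The main obstacle—and the step I would spend the most care on—is showing $J^{\dbP}$ is lower semicontinuous (in fact continuous) along this convergence: one must pass to the limit in the state equation \eqref{g-sde-r}, which requires a stability estimate like \eqref{p-x-n} but now with \emph{both} the control and the solution varying, established by the same BDG--Gr\"onwall scheme combined with the continuity of $(t,a)\mapsto b,\gamma$ and stable convergence; then the convergence $J^{\dbP}(\mu^k)\to J^{\dbP}(\hat\mu_{\dbP})$ follows as in the second stage, so $J^{\dbP}(\hat\mu_{\dbP})=\inf_{\mu\in\cR}J^{\dbP}(\mu)$. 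Measurability of $\hat\mu_{\dbP}$ with respect to the universal filtration, and the adaptedness of the limiting state process, is the technical point to verify carefully, using Lemma 2.1 of \cite{soner1} to pass between $\mathbb F$ and $\mathbb F^{\cP}$ as needed.
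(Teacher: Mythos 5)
Your proposal is correct and takes essentially the same route as the paper: the paper's entire proof consists of observing that, by the aggregation Lemma \ref{aggregation1}, under each fixed $\dbP\in\cP$ the $G$-sde becomes a standard It\^o sde, and then citing \cite{bahlali1} and \cite{bahlali2} for the classical relaxed-control results --- which is precisely the Gr\"onwall--BDG stability estimate, the stable-convergence/dominated-convergence argument, and the compactness-of-$\cR$ machinery that you reconstruct in detail. The one point to keep in mind is that in your third stage the limiting control obtained by tightness/Skorokhod a priori lives on an enlarged space, so identifying it as an element of $\cR$ adapted on the original filtered space is a genuine technicality; but this is inherited from the cited classical references and is not a gap you introduce relative to the paper.
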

Since, due to the aggregation property of Lemma \eqref{aggregation1}, under every $\dbP\in \cP$, the G-sde \eqref{g-sde-r} becomes a standard sde, the proof of this result follows from  \cite{bahlali1} or \cite{bahlali2}.

\ms
The purpose of the next proposition is to make the limit \eqref{p-x-n} valid under the sublinear expectation $\widehat{\E}[\cdot]$.

\begin{proposition}\label{g-stability-x} Suppose that $b, \gamma$ and $\sigma$ satisfy condition (H1). Let $x^{\mu}$ and $x^n$ be
the solutions of \eqref{g-sde-r} and \eqref{g-x-n}, respectively. Then
\begin{equation}\label{G-x-n}
\lim_{n\rightarrow \infty }\hat\E\left[ \sup_{0\leq t\leq T}\left\vert
x^n(t)-x^{\mu}(t)\right\vert ^{2}\right] =0.
\end{equation}
\end{proposition}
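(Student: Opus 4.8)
\textbf{Proof plan for Proposition \ref{g-stability-x}.}

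The strategy is to upgrade the $\dbP$-by-$\dbP$ convergence in Lemma \ref{p-x-J-n} to convergence under $\widehat\E[\cdot]$ by exploiting the dual representation $\widehat\E[\cdot]=\sup_{\dbP\in\cP}\E^{\dbP}[\cdot]$ together with uniform (in $\dbP$ and $n$) moment bounds, so that the pointwise limit becomes a uniform one. First I would set $\Delta_n(t):=x^n(t)-x^\mu(t)$ and write the integral equation for $\Delta_n$ obtained by subtracting \eqref{g-sde-r} from \eqref{g-x-n}, splitting the drift difference into a ``Lipschitz in space'' part $b(t,x^n(t),u^n(t))-b(t,x^\mu(t),u^n(t))$ and a ``control'' part $b(t,x^\mu(t),u^n(t))-\int_U b(t,x^\mu(t),a)\mu_t(da)$, and similarly for $\gamma$; the diffusion term contributes only a Lipschitz-in-space part since $\sigma$ does not depend on the control. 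Applying the G-BDG estimates of Lemma \ref{G-BDG} to the stochastic integral and the $d\langle B\rangle$ integral (the latter using boundedness of $d\langle B\rangle_t/dt$ by $\bar\sigma$, which holds in the G-setting because $\Gamma$ is bounded), together with Jensen/Hölder for the $dt$ integral, yields
\begin{equation*}
\widehat\E\Big[\sup_{0\le s\le t}|\Delta_n(s)|^2\Big]\le C\int_0^t\widehat\E\Big[\sup_{0\le r\le s}|\Delta_n(r)|^2\Big]\,ds + C\,\widehat\E[R_n],
\end{equation*}
where $C$ depends only on $T$, $\bar\sigma$ and the Lipschitz constant from (H1), and $R_n$ collects the control-part remainders (squared $L^2(0,T)$ norms of $b(t,x^\mu(t),u^n(t))-\int_U b(t,x^\mu(t),a)\mu_t(da)$ and the analogous $\gamma$ term). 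A Gronwall argument then reduces everything to showing $\widehat\E[R_n]\to 0$.

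To control $\widehat\E[R_n]$ I would use the dual representation: $\widehat\E[R_n]=\sup_{\dbP\in\cP}\E^{\dbP}[R_n]$. For each fixed $\dbP$, the G-Chattering Lemma gives $\delta_{u^n(t)}(da)dt\to\mu_t(da)dt$ stably, $\dbP$-a.s.; since $b(t,x^\mu(t),\cdot)$ is bounded and continuous in $a$ (by (H1)) and $t\mapsto b(t,x^\mu(t),a)$ is measurable, stable convergence gives $\int_U b(t,x^\mu(t),a)\delta_{u^n(t)}(da)\to\int_U b(t,x^\mu(t),a)\mu_t(da)$ for a.e.\ $t$, $\dbP$-a.s., hence $R_n\to 0$ $\dbP$-a.s.\ by dominated convergence in $t$ (the integrand is uniformly bounded by (H1)), and then $\E^{\dbP}[R_n]\to 0$ by dominated convergence again. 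But this is only pointwise in $\dbP$; the genuine obstacle is to get the convergence \emph{uniformly over $\cP$}. Here I would invoke the stability Lemma \ref{denis-2}: $R_n$ is (after the routine check) an element of $\dbL^1_G(\Omega_T)$ uniformly bounded in $n$, and $\cP$ is weakly compact, so if the sup were not $o(1)$ one could extract $\dbP_n\in\cP$ with $\E^{\dbP_n}[R_n]\ge\varepsilon$ and, passing to a weakly convergent subsequence $\dbP_n\to\dbP_\infty\in\cP$, derive a contradiction with $R_n\to 0$ in an appropriate sense. The cleanest route, and the one I expect to be the technical heart of the argument, is instead to note that $\widehat\E[R_n]$ itself, being a sublinear expectation of a quasi-surely convergent, uniformly bounded sequence, tends to $0$ by the dominated convergence theorem for the G-expectation (valid on $\dbL^1_G$ for quasi-surely convergent, uniformly bounded sequences) — the G-Chattering Lemma delivers precisely the quasi-sure convergence $\delta_{u^n(t)}(da)dt\to\mu_t(da)dt$, and (H1) delivers the uniform bound and the continuity needed to pass the limit through the $a$-integral q.s.

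The main obstacle is thus the passage from $\dbP$-wise convergence to convergence of the supremum over the (non-dominated) family $\cP$: one cannot simply interchange $\sup_{\dbP}$ and $\lim_n$. The remedy is that the G-Chattering Lemma was stated with \emph{quasi-sure} convergence, which is exactly the hypothesis under which the G-dominated convergence theorem applies; combined with the uniform boundedness coming from (H1)–(H2) this gives $\widehat\E[R_n]\to0$ directly, and the Gronwall step above then finishes the proof. A secondary point to handle carefully is measurability/integrability of $R_n$ — i.e.\ that the control-part remainders genuinely lie in $M_G^2(0,T)$ and $\dbL^1_G(\Omega_T)$ — which follows from (H1), the embedding $x^\mu\in M_G^2(0,T)$, and the progressive measurability of $\mu$ and $u^n$ built into $\cR$ and $\U([0,T])$.
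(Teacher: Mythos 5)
Your reduction via Gronwall and the G-BDG estimates of Lemma \ref{G-BDG} to the statement $\widehat\E[R_n]\to 0$ is sound (subadditivity of $\widehat\E$ lets the classical splitting go through), and you correctly identify the real obstacle: one cannot interchange $\lim_n$ with $\sup_{\dbP\in\cP}$. But the route you single out as ``the cleanest'' and ``the technical heart'' --- a dominated convergence theorem for the G-expectation applied to a uniformly bounded, quasi-surely convergent sequence --- is precisely the step that fails, and the paper says so explicitly in Remark \ref{dominated convergence}: under a sublinear expectation over a non-dominated family $\cP$, neither dominated convergence nor Fatou's lemma is valid. The underlying reason is that quasi-sure convergence does not imply convergence in capacity (Egorov-type arguments break down for a merely subadditive set function), so for each $n$ there may exist $\dbP_n\in\cP$ under which $R_n$ is still large with non-negligible probability, and $\sup_{\dbP\in\cP}\E^{\dbP}[R_n]$ need not tend to $0$ even though $\E^{\dbP}[R_n]\to0$ for every fixed $\dbP$ and $|R_n|$ is uniformly bounded. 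As written, your primary argument therefore does not close.

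The fallback you mention only in passing --- weak compactness of $\cP$ together with the stability Lemma \ref{denis-2} --- is in fact the paper's entire proof, and it is deployed more economically: the authors set $\xi_n:=\sup_{0\le t\le T}|x^n(t)-x^{\mu}(t)|^2\in\mathbb{L}^1_G(\Omega_T)$ directly, note that $\E^{\dbP}[\xi_n]\to0$ for every fixed $\dbP$ by Lemma \ref{p-x-J-n} (where all of the Gronwall/BDG/stable-convergence work is carried out $\dbP$-by-$\dbP$ under an ordinary expectation, where dominated convergence \emph{is} available), and then argue by contradiction: if $\widehat\E[\xi_n]\ge\delta$ for all $n$, choose $\dbP_n\in\cP$ with $\E^{\dbP_n}[\xi_n]\ge\delta-\tfrac1n$, extract a weakly convergent subsequence $\dbP_{n_k}\to\dbP$, and use Lemma \ref{denis-2} to contradict $\E^{\dbP}[\xi_{n_j}]\to0$. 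If you replace your G-DCT step by carrying out this compactness argument in full (for $R_n$, or better, skip the $\widehat\E$-level Gronwall reduction altogether and apply it to $\xi_n$, which makes that machinery unnecessary), your proof becomes the paper's.
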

\begin{proof}
Set $\xi_n:=\sup_{0\leq t\leq T}\left\vert x^n(t)-x^{\mu}(t)\right\vert ^{2}$ and note that for each $n\ge1$, $\xi_n \in\mathbb{L}^1_G(\Omega_T)$. If there is a $\delta>0$ such that $\hat\E[\xi_n]\ge \delta,\, n=1,2,\ldots,$ we then can find a probability $\dbP_n\in\cP$ such that $\E^{\dbP_n}[\xi_n]\ge \delta-\frac{1}{n},\,\, n=1,2,\ldots.$ Since $\cP$ is weakly compact, there exists a subsequence $\{\dbP_{n_k}\}_{k=1}^{\infty}$  that converges weakly to some $\dbP\in\cP$.  We then have
$$
\lim_{j\to\infty}\E^{\dbP}[\xi_{n_j}]=\lim_{j\to\infty}\lim_{k\to\infty}\E^{\dbP_{n_k}}[\xi_{n_j}]\ge \liminf_{k\to\infty}\E^{\dbP_{n_k}}[\xi_{n_k}]\ge \delta.
$$
This contradicts the fact that $\underset{j\to\infty}\lim\E^{\dbP}[\xi_{n_j}]=0$ from Lemma \eqref{p-x-J-n}.
\end{proof}

\begin{remark}\label{dominated convergence} The proof of \eqref{p-x-n} relies on the standard Gronwall and Burkholder-Davis-Gundy inequalities together with the Dominated Convergence Theorem, due to the stable convergence of $\delta_{u_n(t)}(da)dt$ to $\mu_t(da)dt$. Unfortunately, the method of the proof does not extend to prove \eqref{G-x-n}, because under sublinear expectation, the Dominated Convergence Theorem (and even the celebrated Fatou's Lemma) is no longer valid, although the Gronwall and  Burkholder-Davis-Gundy inequalities (see Lemma \eqref{G-BDG}) are still valid for G-sdes and G-Brownian stochastic integrals. 
\end{remark}

\bc \label{g-stability-J} Suppose that $f$ and $h$ satisfy assumption (H2). Let $J(u^{n})$ and $J(\mu )$
be the performance functionals corresponding respectively to $u^{n}$ and $\mu $ where $dt\delta_{u^{n}(t)}(da)$ converges weakly to $dt\mu_t(da)$ quasi-surely. Then, there exists a subsequence $\left(
u^{n_{k}}\right)$ of $\left( u^{n}\right) $ such that 
$$
\lim_{k\to \infty }J(u^{n_k})=J(\mu).
$$
\ec
\begin{proof} From Proposition 17 in \cite{denis2} and Proposition \eqref{g-stability-x} it follows that there exists a subsequence $\left(x^{n_k}(t)\right)_{n_k}$ that converges to $x^{\mu}(t)$ quasi-surely i.e. $\dbP\as$, for all $\dbP\in\cP$, uniformly in $t$. We may apply Lemma \eqref{p-x-J-n} to obtain, for every $\dbP\in\cP$,

\be\label{lim}
\lim_{k\to \infty }J^{\dbP}(u^{n_k})=J^{\dbP}(\mu).
\ee

Using the notation \eqref{rate-J}, we note that $J(u^{n_k})=\widehat\E[\chi^{u^{n_k}}]$ and $J(\mu)=\widehat\E[\chi^{\mu}]$, where both $\chi^{u^{n_k}}$ and $\chi^{\mu}$ belong to $\mathbb{L}^1_G(\Omega_T)$.
If there is some $\delta>0$ such that $\widehat\E[\chi^{u^{n_k}}]\ge \widehat\E[\chi^{\mu}]+\delta,\,n_k\ge \ell,\ell+1,\ldots$, we can then find a probability measure $\dbP_m\in\cP$ such that 
$$
\E^{\dbP_{m}}[\chi^{u^{n_k}}]\ge \widehat\E[\chi^{\mu}]+\delta-\frac{1}{m}.
$$
Since $\cP$ is weakly compact, then we can find a subsequence $\{\dbP_{m_k}\}_{k=1}^{\infty}$ that converges to $\dbP\in\cP$. We then have
\begin{equation*}
\begin{array}{lll}
\E^{\dbP}[\chi^{\mu}]=\underset{k\to\infty}\lim\E^{\dbP_{m_k}}[\chi^{\mu}]=\underset{k\to\infty}\lim\underset{j\to\infty}\lim\E^{\dbP_{m_k}}[\chi^{u_{n_j}}]\ge \underset{j\to\infty}\liminf\, \E^{\dbP_{m_j}}[\chi^{u_{n_j}}]\\\qquad\qquad\ge\underset{j\to\infty}\liminf\left(\widehat\E[\chi^{\mu}]+\delta-\frac{1}{m_j}\right)=\widehat\E[\chi^{\mu}]+\delta.
\end{array}
\end{equation*}
Thus, $\E^{\dbP}[\chi^{\mu}]\ge \widehat\E[\chi^{\mu}]+\delta$, which contradicts the definition of the sublinear expectation. Therefore, 
$$
\lim_{k \to \infty }J(u^{n_k})\le J(\mu).
$$

Next, we prove that $\underset{k\to\infty}\lim J(u^{n_k})\ge J(\mu)$. We have 
$$\begin{array}{lll}
\underset{k\to\infty}\lim J(u^{n_k})\ge \underset{k\to\infty}\lim J^{\dbP}(u^{n_k}), \quad \text{for all } \,\, \dbP\in\cP, \\ \qquad\qquad\quad =J^{\dbP}(\mu), \quad\text{(by \eqref{lim})}, \quad \text{for all } \,\, \dbP\in\cP.
\end{array}
$$
Therefore, $\underset{k\to\infty}\lim J(u^{n_k})\ge J(\mu)$.

\end{proof}

{\bf Proof of Theorem \eqref{opt-relax}}. By \eqref{embed} and Corollary \eqref{g-stability-J} we readily get that
$$
\underset{u\in\U[0,T]}{\inf}J(u)\le \underset{\mu\in\cR}{\inf}J(\mu).
$$
On the other hand since, for every $u\in\U[0,T]$, $\delta_u\in\cR$. Therefore,
$$
J(u)=J(\delta_u)\ge \underset{\mu\in\cR}{\inf}J(\mu),
$$
Hence,
$$
\underset{u\in\U[0,T]}{\inf}J(u)\ge \underset{\mu\in\cR}{\inf}J(\mu).
$$
This proves \eqref{inf}.

\ms
We now turn to the proof of existence of relaxed optimal control. Since $f$ and $h$ are continuous and bounded, for each $\nu\in\cR$
$$
\chi^{\nu}:=\int_0^T\int_U f(t,x^{\nu}(t),a)\nu_t(da)\,dt+h(x^{\nu}(T))\in \mathbb{L}^1_G(\Omega_T).
$$
By Lemma \eqref{denis-2} we then obtain that for every $\nu\in\cR$,
\begin{equation}\label{Q-conv}
\lim_{n\to\infty}J^{\dbP_n}(\nu)=J^{\dbQ}(\nu),
\end{equation}
whenever, the sequence $\{\dbP_n\}_{n=1}^{\infty}\in\cP$ converges weakly to $\dbQ\in\cP$.

Assume that there is an $\varepsilon>0$ such that, for every $\nu\in\cR$, 
$$
J(\nu)\ge\inf_{\mu \in \cR}J(\mu)+\varepsilon.
$$
Since, by Lemma \eqref{p-x-J-n}, for every $\dbP\in\cP$, there exists a relaxed control $\hat\mu\in\cR$ such that 
$\hat\mu_{\dbP}=\arg\min_{\mu\in\cR} J^{\dbP}(\mu)$, we obtain
$$
J(\nu)\ge \sup_{\dbP\in\cP}\inf_{\mu \in \cR}J^{\dbP}(\mu)+\varepsilon= \sup_{\dbP\in\cP}J^{\dbP}(\hat\mu_{\dbP})+\varepsilon.
$$
On the other hand, for every $n\ge 1$, there exists $\dbP_n\in\cP$ such that
$$
J^{\dbP_n}(\nu)\ge J(\nu)+\frac{1}{n}.
$$
The sequence $\{\dbP_n\}_{n=1}^{\infty}\in\cP$ being weakly compact, we can extract a subsequence  $\{\dbP_{n_j}\}_{j=1}^{\infty}\in\cP$ which converges weakly to some $\mathbb{Q}\in\cP$. Thus, it follows from \eqref{Q-conv} that, for every $\nu\in\cR$,
$$
J^{\dbQ}(\nu)=\lim_{j\to\infty} J^{\dbP_{n_j}}(\nu)\ge\sup_{\dbP\in\cP}J^{\dbP}(\hat\mu_{\dbP})+\varepsilon.
$$
In particular, for a given $\nu^{\dbQ}\in\cR$, we obtain
$$
J^{\dbQ}(\nu^{\dbQ})\ge J^{\dbQ}(\nu^{\dbQ})+\varepsilon,
$$
which contradicts the fact that $\varepsilon>0$. \qed

\begin{bibdiv}
\begin{biblist}
\bib{bahlali1}{article}{
author={Bahlali, Saed },
author={Mezerdi, Brahim},
author={Djehiche, Boualem},
  title={Approximation and optimality necessary conditions in relaxed stochastic control problems},
  journal={International Journal of Stochastic Analysis},
  volume={2006},
  pages={1-23},
  year={2006},
  publisher={Hindawi Publishing Corporation}
  }

\bib{bahlali2}{article}{
  author={Bahlali, Khaled },
  author={Mezerdi, Meriem},
  author={Mezerdi, Brahim},
  title={Existence of optimal controls for systems governed by mean-field stochastic differential equations},
  journal={Afrika Statistika},
  volume={9},
  number={1},
  pages={627--645},
  year={2014},
  publisher={Presses Universitaires de Saint-Louis}
  }

\bib{biagini2014}{article}{
  title={Optimal control with delayed information flow of systems driven by G-Brownian motion},
  author={Biagini, Francesca},
  author={Meyer-Brandis, Thilo},
  author={{\O}ksendal, Bernt},
  author={Paczka,  Krzysztof},
  journal={arXiv preprint arXiv:1402.3139v3 [math.OC]},
  year={2014}
}

\bib{denis1}{article}{
author={Denis, Laurent},
author={Martini, Claude},
title={Function spaces and capacity related to a sublinear expectation: application to G-Brownian motion paths},
  journal={The Annals of Applied Probability},
  pages={827--852},
  year={2006},
  publisher={JSTOR}
}

\bib{denis2}{article}{
author={Denis, Laurent},
author={Hu, Mingshang},
author={Peng, Shige},
title={Function spaces and capacity related to a sublinear expectation: application to G-Brownian motion paths},
journal={Potential Analysis},
  volume={34},
  number={2},
  pages={139--161},
  year={2011},
  publisher={Springer}
}

\bib{elkaroui1987}{article}{
author={El Karoui, Nicole },
author={Nguyen, Du′ Hu{\`u}},
author={Jeanblanc-Picqu{\'e}, Monique},
title={Complification methods in the control of degenerate diffusions:existence of an optimal control},
journal={Stochastics},
  volume={20},
  number={3},
  pages={169-219},
  year={1987},
  }
  
\bib{elkaroui1988}{article}{
author={El Karoui, Nicole },
author={Nguyen, Du′ Hu{\`u}},
author={Jeanblanc-Picqu{\'e}, Monique},
title={Existence of an optimal Markovian filter for the control under partial observations},
journal={SIAM journal on control and optimization},
  volume={26},
  number={5},
  pages={1025--1061},
  year={1988},
  publisher={SIAM}
  }
  
 \bib{fleming}{article}{
   author={Fleming, WH},
   author={Nisio,  M.},
  title={On stochastic relaxed control for partially observed diffusions},
 journal={Nagoya Mathematical Journal},
  volume={93},
  pages={71--108},
  year={1984},
  publisher={Cambridge University Press}
}

\bib{gao2009}{article}{
  title={Pathwise properties and homeomorphic flows for stochastic differential equations driven by G-Brownian motion},
  author={Gao, Fuqing},
  journal={Stochastic Processes and their Applications},
  volume={119},
  number={10},
  pages={3356--3382},
  year={2009},
  publisher={Elsevier}
}   
   \bib{hu2013}{article}{
  title={A stochastic recursive optimal control problem under the $G$-expectation framework},
  author={Hu,M.},
  author={Ji, S.},
  author={Yand, S.},
  journal={arXiv preprint arXiv:1306.1312 [math.OC]},
  year={2013}
}
  \bib{luo2014}{article}{
  title={Stochastic differential equations driven by G-Brownian motion and ordinary differential equations},
  author={Luo, Peng},
  author={Wang, Falei},
  journal={Stochastic Processes and their Applications},
  volume={124},
  number={11},
  pages={3869--3885},
  year={2014},
  publisher={Elsevier}
} 
  
  \bib{matoussi2015}{article}{
  title={Robust utility maximization in non-dominated models with 2BSDEs},
  author={Matoussi, A.},
  author={Possamai, D.},
  author={Zhou, C.},
  journal={Mathematical Finance},
  volume={25}, 
  number={2},
  pages={258-287},
  year={2015},
  publisher={Wiley}
}
 
\bib{peng2004}{article}{
  title={Filtration consistent nonlinear expectations and evaluations of contingent claims},
  author={Peng, Shige},
  journal={Acta Mathematicae Applicatae Sinica, English Series},
  volume={20},
  number={2},
  pages={1--24},
  year={2004},
  publisher={Springer}
}

\bib{peng2007}{article}{
  title={G-expectation, G-Brownian motion and related stochastic calculus of It{\^o} type},
  author={Peng, Shige},
  booktitle={Stochastic analysis and applications, The Abel Symposium 2005, Abel Symposia 2, Edit. Benth {\it et al.}},
  pages={541--567},
  year={2007},
  publisher={Springer}
}  

\bib{peng2008}{article}{
  title={Multi-Dimensional G-Brownian Motion and Related Stochastic Calculus under G-Expectation},
  author={Peng, Shige},
  journal={Stochastic Processes and their Applications},
  volume={118},
  number={12}
  pages={2223--2253},
  year={2008},
  publisher={Springer}
}

\bib{peng2009}{article}{
  title={Survey on normal distributions, central limit theorem, Brownian motion and the related stochastic calculus under sublinear expectations},
  author={Peng, Shige},
  journal={Science in China Series A: Mathematics},
  volume={52},
  number={7},
  pages={1391--1411},
  year={2009},
  publisher={Springer}
}

\bib{peng2010}{article}{
  title={Nonlinear expectations and stochastic calculus under uncertainty},
  author={Peng, Shige},
  journal={arXiv preprint arXiv:1002.4546},
  year={2010}
}

\bib{soner1}{article}{
  author={Soner, H. Mete},
author={Touzi, Nizar},
author={Zhang, Jianfeng},  
  title={Martingale representation theorem for the G-expectation},
  journal={Stochastic Processes and their Applications},
  volume={121},
  number={2},
  pages={265--287},
  year={2011},
  publisher={Elsevier}
}

\bib{soner2}{article}{
author={Soner, H. Mete},
author={Touzi, Nizar},
author={Zhang, Jianfeng},
title={Quasi-sure stochastic analysis through aggregation},
journal={Electron. J. Probab},
  volume={16},
  number={2},
  pages={1844--1879},
  year={2011}
}

\bib{soner3}{article}{
author={Soner, H. Mete},
author={Touzi, Nizar},
author={Zhang, Jianfeng},
  title={Wellposedness of second order backward SDEs},
 journal={Probability Theory and Related Fields},
  volume={153},
  number={1-2},
  pages={149--190},
  year={2012},
  publisher={Springer}
}

\end{biblist}
\end{bibdiv}

\end{document}